\documentclass[12pt,a4paper]{article}
\usepackage[T1]{fontenc}
\usepackage[british]{babel}
\usepackage[utf8]{inputenc}
\usepackage{amsmath,amssymb,amsthm}
\usepackage{hyperref}

\newcommand\XX{\mathfrak{X}}
\newcommand\F{\mathrm{F}}

\DeclareMathOperator{\N}{N}
\DeclareMathOperator{\Oo}{O}

\DeclareMathOperator{\C}{C}
\DeclareMathOperator{\Soc}{Soc}

\DeclareMathOperator{\Z}{Z}
\DeclareMathOperator{\Fit}{F}
\DeclareMathOperator{\Ker}{Ker}

\DeclareMathOperator{\Aut}{Aut}
\DeclareMathOperator{\Id}{Id}

\DeclareMathOperator{\Q}{Q}

\DeclareMathOperator{\IYB}{IYB}
\DeclareMathOperator{\YB}{YB}

\newtheorem{theorem}{Theorem}
\newtheorem{corollary}[theorem]{Corollary}
\newtheorem{lemma}[theorem]{Lemma}
\newtheorem{question}[theorem]{Question}
\newtheorem{athm}{Theorem}

\theoremstyle{definition}
\newtheorem{definition}[theorem]{Definition}

\title{On a class of involutive Yang-Baxter groups}

\author{H. Meng\thanks{E-mail: hymeng2009@shu.edu.cn. Department of Mathematics
and Newtouch Center for Mathematics of Shanghai University,
Shanghai 200444, P. R. China. },~A. Ballester-Bolinches\thanks{E-mail: Adolfo.Ballester@uv.es. Departament de Matem\'{a}tiques, Universitat de Val\'{e}ncia, Dr. Moliner 50, 46100 Burjassot, Val\'{e}ncia,
Spain.},~P. Jim\'{e}nez-Seral\thanks{E-mail: paz@unizar.es. Departamento de Matem\'{a}ticas, Universidad de Zaragoza, Pedro Cerbuna, 12, 50009 Zaragoza, Spain.}
}
\date{}
\begin{document}
\maketitle

\begin{abstract}
A group is called an involutive Yang-Baxter group (IYB-group) if it is isomorphic to the permutation group of an involutive, non-degenerate set-theoretic solution of the Yang-Baxter equation. This paper investigates finite soluble groups whose Sylow subgroups have nilpotency class at most two, addressing Ced\'{o} and Okni\'{n}ski's question~\cite{CedoOkninski2025} of whether such groups are IYB-groups. We establish that a finite soluble group with Sylow subgroups of class at most two is an IYB-group if its nilpotent residual is $\Q_8$-free. We also prove that a finite soluble group with Sylow subgroups of class at most two and Sylow $2$-subgroups isomorphic to $Q_8$ is an IYB-group. Our approach depends on a careful study of the arithmetic and normal structure of a finite soluble group which is of independent interest.

\noindent\emph{Mathematics Subject Classification (2010): 16T25, 20D10, 20F16.}

\noindent\emph{Keywords: left braces, Yang–Baxter
equation, soluble groups, system normalisers}
\end{abstract}

\section{Introduction}
The Yang-Baxter equation plays a fundamental role in mathematical physics and serves as a cornerstone in the theory of quantum groups. In~\cite{Drinfeld1992}, Drinfeld proposed investigating set-theoretic solutions to this equation. A set-theoretic solution of the Yang–Baxter
equation is a pair $(X,r)$, where $X$ is a nonempty set and $r: X\times X \rightarrow X \times X$ is a map such that
$$r_{12}r_{23}r_{12}=r_{23}r_{12}r_{23},$$
where $r_{12}=r\times \Id_X, r_{23}=\Id_X \times r$ are maps from $X^3$ to $X^3$. Subsequent research by Etingof, Schedler and Soloviev~\cite{EtingofSchedlerSoloviev1999}, as well as Gateva-Ivanova and Van den Bergh~\cite{Gateva-IvanovaVandenBergh1998},
led to the study of a specific family of solutions known as \emph{involutive non-degenerate set-theoretic solutions.}

For all \( x, y \in X \), we define two maps \( f_x : X \longrightarrow X \) and \( g_y : X \longrightarrow X \) by setting \( r(x, y) = (f_x(y), g_y(x)) \). We say that the solution \((X, r)\) is \emph{involutive} if \( r^2 = \text{id}_{X \times X} \), and that \((X, r)\) is \emph{non-degenerate} if \( f_x, g_y \) are bijective maps for all \( x, y \in X \). By a \emph{solution of the YBE}, we will understand an involutive, non-degenerate set-theoretic solution of the Yang-Baxter equation.

Let \((X, r)\) be a solution of the YBE. The \emph{permutation group} of \((X, r)\) is the subgroup \(\mathcal{G}(X, r)\) of \(\text{Sym}(X)\) generated by the bijections \( f_x \) for all \( x \in X \), that is,
\[
\mathcal{G}(X, r) = \langle f_x \mid x \in X \rangle \leq \text{Sym}(X).
\]
A group \( G \) is called an \emph{involutive Yang-Baxter group}, or an IYB-group for short, if there exists a solution \((X, r)\) of the YBE such that \( G \cong \mathcal{G}(X, r) \).

To further explore such solutions, Rump~\cite{Rump2007} introduced a new algebraic structure called left brace, which was extended to a non-commutative setting by Guarnieri and Vendramin in \cite{GuarnieriVendramin2017}.

A \emph{skew (left) brace} is a triple \((B, +, \cdot)\) where \((B, +)\) and \((B, \cdot)\) are groups, referred to as the additive group (non-necessarily abelian) and the multiplicative group respectively, satisfying the following compatibility condition for all \(a, b, c \in B\):
\[
a \cdot (b + c) = a \cdot b - a + a \cdot c.
\]
Given a class of groups $\mathfrak{X}$, a skew left brace \(B\) is said to be of $\mathfrak{X}$-type if its additive group belongs to $\mathfrak{X}$. Rump's left braces are exactly the skew left braces of $\mathcal{A}$-type, where $\mathcal{A}$ is the class of all abelian groups.

Following \cite{BEJP2023}, a group $G$ is called an $\mathfrak{X}\YB$-group if $G$ is isomorphic to the multiplicative group of a skew left brace of $\mathfrak{X}$-type. 

Applying \cite[Theorem~2.1]{CedoJespersRio2010}, it follows that a group $(G, \cdot)$ is an IYB-group if and only if it can be defined a binary operation $+$ on $G$ such that $(G,+,\cdot)$ is a left brace, that is, $(G, \cdot)$ is an $\mathcal{A}\YB$-group.

If $(G,\cdot)$ is a group in $\mathfrak{X}$, then we can regard $(G,+, \cdot)$ as a \emph{trivial skew left brace} of  $\mathfrak{X}$-type with with the addition "+" defined by $a+b:=ab, a,b \in G.$ Hence $\mathfrak{X}$ is a subclass of the class of all $\mathfrak{X}\YB$-groups.


The class of all finite IYB-groups is a subclass of the class $\mathcal S$ of all finite soluble groups (\cite{EtingofSchedlerSoloviev1999}) and, by a result of Bachiller (\cite{Bachiller2016}), it is a proper subclass of $\mathcal S$. However, it includes the class of all finite nilpotent groups of class two (\cite{AultWatters1973}), the class of all finite abelian-by-cyclic groups (\cite[Corollary~3.10]{CedoJespersRio2010}), and the class of finite soluble
groups which are semidirect products $A \rtimes H$, where $A$ is nilpotent of class at most two with odd order commutator subgroup and $H$ is an IYB-group (\cite[Corollary~4.2 and Corollary~4.3]{MengBallesterEstebanFuster2021} and \cite[Proposition~2.2]{CedoOkninski2025}]).


Another important class of IYB-groups is the class of all finite soluble groups whose Sylow subgroups are abelian (\cite[Corollary~4.3]{DavidGinosar2016}). Recently, Ced\'{o} and Okni\'{n}ski proved the following significant improvement of the above result.


\begin{theorem}[{\cite[Theorem~2.6]{CedoOkninski2025}}]
Let $G$ be a finite soluble group with all Sylow subgroups of nilpotency class at most two, and Sylow $2$-subgroups abelian. Then $G$ is an $\IYB$-group.
\end{theorem}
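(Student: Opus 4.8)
The plan is to argue by induction on $|G|$, using as the engine the fact (recalled in the introduction) that a semidirect product $A\rtimes H$ is an IYB-group whenever $A$ is nilpotent of class at most two with commutator subgroup of odd order and $H$ is an IYB-group. The whole theorem then reduces to the following purely group-theoretic statement: \emph{every non-nilpotent finite soluble group $G$ with all Sylow subgroups of class at most two and abelian Sylow $2$-subgroups has a nontrivial complemented normal nilpotent subgroup $A$ of class at most two}. Indeed, granting this, write $G=A\rtimes H$; then $A$ is the direct product of its Sylow subgroups, each of which is the intersection of $A$ with a Sylow subgroup of $G$, so the $2$-part of $A$ lies in an abelian Sylow $2$-subgroup of $G$ and hence $[A,A]$ has odd order. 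Moreover $H\cong G/A$ again has all Sylow subgroups of class at most two (they are quotients of those of $G$) and abelian Sylow $2$-subgroups (a quotient of one of $G$), and $|H|<|G|$, so $H$ is an IYB-group by induction and $G$ is an IYB-group by the engine. The base of the induction is clear: if $G$ is nilpotent it is the direct product of its Sylow subgroups, so $G$ itself has nilpotency class at most two and is an IYB-group by \cite{AultWatters1973}.

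So assume $G$ is not nilpotent, whence $1\neq\Fit(G)\neq G$. If $\Phi(G)=1$, then $\Fit(G)=\Soc(G)$ is a direct product of minimal normal subgroups of $G$ and has a complement in $G$ by Gasch\"{u}tz; and if, more generally, $\Fit(G)$ is a Hall subgroup of $G$ — i.e.\ no prime divides both $|\Fit(G)|$ and $|G:\Fit(G)|$ — then it has a complement by the Schur--Zassenhaus theorem. In either case $A=\Fit(G)$ does the job, which disposes of these cases.

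The remaining, and crucial, case is that $\Phi(G)\neq 1$ while $\Fit(G)$ is not a Hall subgroup: there is then a prime $p$ with $\Oo_p(G)\neq 1$ and $p\mid|G:\Fit(G)|$, and no complement to $\Fit(G)$ need exist. I would attack this by a close examination of the chief factors and Hall systems of $G$, and this is exactly where the hypotheses must be exploited. Since every Sylow subgroup of $G$ has class at most two and the Sylow $2$-subgroup is abelian, the group induced by $G$ on each chief factor — a subgroup of the corresponding general linear group — is heavily constrained; combined with the fact that $G$ induces on $\Fit(G)/\Phi(G)$ a group with trivial $q$-core for every prime $q$, this should force that some $\Oo_q(G)$ is already a normal Sylow $q$-subgroup of $G$, or at any rate that $G$ has a nontrivial complemented normal nilpotent subgroup of class at most two. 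A natural device here is a Sylow system $\Sigma$ of $G$ with system normaliser $D=D(\Sigma)$: $D$ is nilpotent of class at most two, $G=G^{\NN}D$ with $G^{\NN}$ the nilpotent residual, and one hopes to split off from the bottom a normal nilpotent subgroup to which $D$ restricts as a complement. I expect precisely this step — converting the hypothesis that all Sylow subgroups have class at most two and the Sylow $2$-subgroup is abelian into the existence of the required complemented normal nilpotent subgroup in the Frattini/non-Hall case — to be the main obstacle; once it is settled the induction closes at once via the $A\rtimes H$ result.
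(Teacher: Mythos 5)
Your overall scheme --- induct on $|G|$, peel off a complemented normal nilpotent subgroup $A$ of class at most two with $|A'|$ odd, and close the induction with the $A\rtimes H$ result --- is coherent, and your treatment of the easy cases ($\Phi(G)=1$ via Gasch\"utz, $\Fit(G)$ a Hall subgroup via Schur--Zassenhaus) and of the inheritance of the hypotheses by $G/A$ is correct. But the proof has a genuine gap exactly where you flag it: the claim that a non-nilpotent $G$ satisfying the hypotheses always possesses a nontrivial \emph{complemented} normal nilpotent subgroup is never established in the remaining case ($\Phi(G)\neq 1$ and $\Fit(G)$ not Hall). Everything after ``I would attack this by\dots'' is a list of hopes (``this should force'', ``one hopes to split off''), and since that case is the entire content of the theorem beyond the trivial reductions, the argument as written does not prove the statement.

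Moreover, the paper's own route strongly suggests that your splitting claim is asking for more than is available. The decomposition actually constructed there (Theorem~\ref{Ndec}, via Lemma~\ref{lem-complement} and relative system normalisers, i.e.\ precisely the device $G=G^{\mathcal{N}}D$ you name) produces $G=A_1H_1$ with $A_1=\Fit_2(G)^{\mathcal{N}}$ a nontrivial normal nilpotent subgroup, but only with $A_1\cap H_1\leq A_1'$ --- a possibly nontrivial intersection, not a complement. The nontrivial intersection is then absorbed not by group theory but by the brace-amalgamation results (Theorem~\ref{tA} and Theorem~\ref{thm-main}), which tolerate $N\cap H$ provided it sits inside $\Z(N)\cap\operatorname{Ker}(N)$ and the analogous subgroups of $H$; the hypotheses on Sylow subgroups are used (Lemma~\ref{lem-nilpotency-class-2}) to force the intersections into the relevant centres, and the abelian-Sylow-$2$ hypothesis enters only to make each $|A_i'|$ odd so that Lemma~\ref{lem-odd-order-commutator} applies. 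So to complete your proof you would either have to prove your complementation claim outright --- for which you give no argument and which does not follow from the system-normaliser machinery --- or weaken it to ``$A\cap H\leq A'$'' and then replace the semidirect-product engine by a result handling non-split products, which is essentially what the paper does in proving the stronger Theorem~\ref{thm-Q-8}.
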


A natural and interesting question~\cite[Question~2.7]{CedoOkninski2025} is also raised:
\begin{question}
Let $G$ be a finite soluble group such that all Sylow subgroups have nilpotency class at most two. Is $G$ an IYB-group?
\end{question}

We will address this question by establishing several significant results that contribute to the solution of this question. Our first result extends Ced\'{o} and Okni\'{n}ski's result to a wider class of all finite soluble groups with Sylow subgroups of class at most $2$.

It is well-known that the class $\mathcal{N}$ of all nilpotent groups is a class of groups which is closed under taking epimorphic images and finite subdirect products. Hence every finite group $G$ has a smallest normal subgroup with nilpotent quotient. This subgroup is called  the \emph{nilpotent residual} of $G$ and it is denoted by $G^{\mathcal{N}}$.

A group $G$ is said to be $\Q_8$-\emph{free} if $G$ has no section isomorphic to the quaternion group $Q_8$ of order $8$.


\begin{athm}\label{thm-Q-8}
Let $G$ be a finite soluble group such that all Sylow subgroups have nilpotency class at most two. If  $G^{\mathcal{N}}$ is $\Q_8$-free, then $G$ is an $\IYB$-group.
\end{athm}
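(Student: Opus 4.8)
The plan is to reduce the theorem, by induction on $|G|$, to the semidirect product result stated in the introduction (\cite{MengBallesterEstebanFuster2021}, \cite[Proposition~2.2]{CedoOkninski2025}): if $G=A\rtimes H$ with $A\trianglelefteq G$ nilpotent of class at most two and $|A'|$ odd, and $H$ an IYB-group, then $G$ is an IYB-group. The observation that makes the induction work is that \emph{any} complement $H$ of a normal subgroup $A$ automatically inherits the hypotheses of Theorem~\ref{thm-Q-8}: $H\cong G/A$ is soluble with Sylow subgroups of class at most two, being a quotient of $G$, and $H^{\mathcal N}=G^{\mathcal N}A/A$ is an epimorphic image of $G^{\mathcal N}$, hence still $\Q_8$-free. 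So if $G$ possesses a nontrivial proper normal subgroup $A$ that is nilpotent of class at most two with $|A'|$ odd and is complemented in $G$, then the induction hypothesis gives that $H$ is an IYB-group and we are done; and if $G$ is nilpotent, then $G$ has class at most two and is an IYB-group by \cite{AultWatters1973}. Thus everything reduces to the following structural statement: \emph{a finite soluble non-nilpotent group with Sylow subgroups of class at most two and $\Q_8$-free nilpotent residual has a complemented normal subgroup of the above kind.}

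One case of this is immediate: if some minimal normal subgroup $N$ of $G$ is not contained in $\Phi(G)$, then $N$ is complemented in $G$ and, being elementary abelian, can be taken as $A$. Hence one reduces to the case $\Soc(G)\le\Phi(G)$, which genuinely occurs — for instance when $G=P\rtimes\langle t\rangle$ with $P$ extraspecial of order $27$ and exponent $3$ and $t$ inverting $P$ modulo $\Z(P)$, where $G^{\mathcal N}=P$ is $\Q_8$-free but $\Soc(G)=\Z(P)=\Phi(G)$ — and here one must dig out a complemented normal subgroup lying strictly inside $\Fit(G)$. The argument has to use $\Q_8$-freeness essentially, because the structural statement is false without it: in $\SL(2,3)$ the Fitting subgroup $\Q_8$ has derived subgroup of even order, its index-$2$ cyclic subgroups are not normal in $\SL(2,3)$, and the unique involution of $\SL(2,3)$ generates a central subgroup with no complement, precisely because of the copy of $\Q_8$; so no subgroup $A$ of the required kind exists, consistently with the nilpotent residual $\Q_8$ of $\SL(2,3)$ not being $\Q_8$-free.

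To prove the structural statement one works relative to a fixed Hall system of $G$ and analyses how the Sylow subgroups of $\Fit(G)$ and of $G^{\mathcal N}$ interact. For odd primes $\Q_8$-freeness is automatic and the relevant Sylow subgroups already have odd-order derived subgroup, so the genuinely new phenomenon — beyond the case treated in \cite[Theorem~2.6]{CedoOkninski2025} — is that $G^{\mathcal N}$ may have non-abelian, but $\Q_8$-free, Sylow $2$-subgroups. The plan is to use this to exhibit inside the $2$-part of $\Fit(G)$ an abelian subgroup that is normal in $G$, has abelian quotient there, and can moreover be complemented in $G$, and then to assemble $A$ from it and an appropriate normal subgroup of odd order. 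This last step — producing the complemented normal subgroup once $\Soc(G)\le\Phi(G)$ — is the main obstacle: it requires a simultaneous grip on the $2$-local structure of $\Fit(G)$ and $G^{\mathcal N}$ and on the splitting of $G$ over the relevant normal subgroups, and it is precisely where the careful study of the arithmetic and normal structure of a finite soluble group promised in the abstract does the work. The contrast between $\SL(2,3)$ and the extraspecial-by-$C_2$ example indicates both why $\Q_8$-freeness of $G^{\mathcal N}$, rather than of $G$, is the correct hypothesis, and why the choice of $A$ must be tailored to the group.
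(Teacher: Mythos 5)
Your reduction is sound as far as it goes, but the statement that carries the entire weight of the argument --- that a non-nilpotent $G$ satisfying the hypotheses, with $\Soc(G)\le\Phi(G)$, possesses a nontrivial \emph{complemented} normal subgroup $A$ that is nilpotent of class at most two with $|A'|$ odd --- is exactly what you do not prove; you name it yourself as ``the main obstacle'' and stop there. That missing step \emph{is} the theorem: everything surrounding it (that $H\cong G/A$ inherits the hypotheses, the case of a minimal normal subgroup outside $\Phi(G)$, the $\SL(2,3)$ sanity check) is routine.

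The paper's proof also indicates that insisting on a genuine complement is the wrong target. It takes $A_1=\Fit_2(G)^{\mathcal N}$ and a relative system normaliser $H_1$ of a complement basis of $\Fit_2(G)$, which yields only $G=A_1H_1$ with $A_1\cap H_1\le A_1'$ (Lemma~\ref{lem-complement}); this intersection is in general nontrivial, and it is never shown --- nor needed --- that $A_1$, or any normal subgroup of the kind you want, is complemented. The nontrivial central intersection is instead absorbed by Theorems~\ref{tA} and~\ref{thm-main}, which are built precisely to glue equivariant IYB-structures along a product $NH$ with $N\cap H$ contained in the kernels and centres of both braces; iterating the system-normaliser construction gives the $\mathcal N$-decomposition of Theorem~\ref{Ndec}, whose intersection conditions $(A_1\cdots A_i)\cap(A_{i+1}\cdots A_k)\le A_1'\cdots A_i'$ are upgraded to centralising conditions by Lemma~\ref{lem-nilpotency-class-2}. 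The role of $\Q_8$-freeness is likewise pinned down differently from what you envisage: by the Seitz--Wright result (Lemma~\ref{lem-Q8-free}), $G^{\mathcal N}$ has abelian Sylow $2$-subgroups, so each factor $A_i\le G^{\mathcal N}$ with $i\le k-1$ has $A_i'$ of odd order and Lemma~\ref{lem-odd-order-commutator} supplies the fully equivariant IYB-structure; no $2$-local analysis of $\Fit(G)$ is required. To complete your route you would have to either prove your splitting statement (for which the paper provides no tools and whose truth in general is far from clear) or relax ``complemented'' to ``supplemented with intersection inside $A'$'' --- at which point you need the gluing machinery of Theorems~\ref{tA} and~\ref{thm-main} anyway, since the semidirect-product result you invoke from \cite{CedoOkninski2025} only covers trivial intersections.
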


In~\cite[Theorem~2.3]{CedoOkninski2025},  Ced\'{o} and Okni\'{n}ski proved that $G$ is a finite soluble Sylow tower group with all Sylow subgroups of nilpotency class at most two, then $G$ is an IYB-group provided that $G$ is either of odd order or the Sylow $2$-subgroups of $G$ are abelian. As a consequence of Theorem~\ref{thm-Q-8}, a related result holds under the weaker assumption that $G$ is $2$-nilpotent as the nilpotent residual $G^{\mathcal{N}}$ of $G$ is of odd order.

\begin{corollary}
Let $G$ be a finite soluble group with all Sylow subgroups of nilpotency class at most two. Suppose that $G$ is $2$-nilpotent. Then $G$ is an $\IYB$-group.
\end{corollary}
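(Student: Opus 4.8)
The plan is to reduce the statement to Theorem~\ref{thm-Q-8} by showing that the hypothesis of $2$-nilpotency forces the nilpotent residual $G^{\mathcal{N}}$ to have odd order, which makes it trivially $\Q_8$-free.

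First I would recall that a finite group $G$ is $2$-nilpotent precisely when it has a normal $2$-complement, that is, when $\Oo_{2'}(G)$ is a Hall $2'$-subgroup of $G$ and $G = \Oo_{2'}(G)P$ for some $P \in \Syl_2(G)$. Then $G/\Oo_{2'}(G) \cong P$ is a $2$-group; since all Sylow subgroups of $G$ have nilpotency class at most two, $P$ is in particular nilpotent, so $G/\Oo_{2'}(G)$ is nilpotent. By the definition of the nilpotent residual as the smallest normal subgroup with nilpotent quotient, this yields $G^{\mathcal{N}} \le \Oo_{2'}(G)$, and hence $|G^{\mathcal{N}}|$ is odd.

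Next I would observe that every section of a group of odd order again has odd order, whereas $|\Q_8| = 8$ is even; therefore a group of odd order --- in particular $G^{\mathcal{N}}$ --- is $\Q_8$-free. All hypotheses of Theorem~\ref{thm-Q-8} are then met: $G$ is a finite soluble group with all Sylow subgroups of nilpotency class at most two and $G^{\mathcal{N}}$ is $\Q_8$-free, so $G$ is an $\IYB$-group.

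Since the argument is a direct specialisation of Theorem~\ref{thm-Q-8}, I do not anticipate any genuine obstacle; the only point requiring a moment's care is the implication ``$G$ is $2$-nilpotent $\Rightarrow G^{\mathcal{N}}$ has odd order'', which rests on the elementary fact that a $2$-group is nilpotent together with the residual property of the formation $\mathcal{N}$ used to define $G^{\mathcal{N}}$.
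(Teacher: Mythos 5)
Your proof is correct and follows exactly the paper's own (very brief) argument: $2$-nilpotency forces $G^{\mathcal{N}}$ to lie in the normal $2$-complement, hence to have odd order and be $\Q_8$-free, so Theorem~\ref{thm-Q-8} applies. You have merely written out in full the deduction the paper states in one line before the corollary.
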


If $G$ is a finite soluble group with Sylow subgroups of nilpotency class at most two and Sylow $2$-subgroups isomorphic to the dihedral group of order $8$, then $G$ is an IYB-group by Theorem~\ref{thm-Q-8}. Our next result analyses the $\Q_8$-case. The reader is referred to \cite[Theorem~3.1]{Bachiller2015-p3} for the classification of left braces of order~$8$.

\begin{athm}\label{thm-Sylow-Q-8}
Let $G$ be a finite soluble group with all Sylow subgroups of nilpotency class at most two. If the Sylow $2$-subgroups of $G$ are isomorphic to $\Q_8$, then $G$ is an $\IYB$-group.
\end{athm}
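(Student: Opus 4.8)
The plan is to argue by induction on $|G|$, reducing everything to Theorem~\ref{thm-Q-8} together with the known brace construction for semidirect products $A\rtimes H$ with $A$ nilpotent of class at most two and of odd-order commutator subgroup and $H$ an $\IYB$-group (\cite[Corollary~4.2 and Corollary~4.3]{MengBallesterEstebanFuster2021}, \cite[Proposition~2.2]{CedoOkninski2025}). First, if $G^{\mathcal{N}}$ is $\Q_8$-free we are done by Theorem~\ref{thm-Q-8}, so assume it has a section isomorphic to $\Q_8$. Then $8\mid|G^{\mathcal{N}}|$, and since $|G^{\mathcal{N}}|_2\le|G|_2=8$ a Sylow $2$-subgroup of $G^{\mathcal{N}}$ coincides with a Sylow $2$-subgroup of $G$; hence it is isomorphic to $\Q_8$ and $G/G^{\mathcal{N}}$ has odd order.

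The key structural input, which I would establish for an arbitrary finite soluble group $G$ with Sylow $2$-subgroups isomorphic to $\Q_8$, is that $G/\Oo_{2'}(G)$ is isomorphic to $\Q_8$ or to $\SL(2,3)$. In $\bar G:=G/\Oo_{2'}(G)$ one has $\Fit(\bar G)=\Oo_2(\bar G)$, a subgroup of $\Q_8$ with $\C_{\bar G}(\Oo_2(\bar G))\le\Oo_2(\bar G)$; the cases $\Oo_2(\bar G)\in\{1,\Z_2,\Z_4\}$ are ruled out by elementary centraliser and automorphism arguments, so $\Oo_2(\bar G)\cong\Q_8$ is the whole Sylow $2$-subgroup of $\bar G$, and then $\bar G/\Z(\Q_8)$ embeds into $\Aut(\Q_8)\cong\Sym(4)$ with Sylow $2$-subgroup of order $4$, forcing $|\bar G|\in\{8,24\}$. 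In particular $G_0:=\Oo_{2',2}(G)$ is a normal subgroup of $G$ equal to $\Oo_{2'}(G)\rtimes\Q_8$ (split by the Schur--Zassenhaus theorem, a complement being a Sylow $2$-subgroup), $G/G_0$ is trivial or cyclic of order $3$, and the case $\Oo_{2'}(G)=1$ --- where $G\cong\Q_8$ or $G\cong\SL(2,3)$ --- is the base of the induction, to be treated directly (a brace on $\SL(2,3)$ being exhibited explicitly, using the classification of left braces of order $8$, \cite[Theorem~3.1]{Bachiller2015-p3}). Note that $\Oo_{2'}(G)$ has odd order and all Sylow subgroups of class at most two, hence is an $\IYB$-group by Theorem~\ref{thm-Q-8}.

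It then remains to manufacture a left brace on $G$ from these data. The intended shape is to produce, via the structural analysis above, a normal subgroup $A$ of $G$ which is nilpotent of class at most two with commutator subgroup of odd order and such that $G=A\rtimes H$ with $H$ an $\IYB$-group obtained by induction; the building block for $A\rtimes H$ then finishes the proof. When $\Oo_{2'}(G)$ is nilpotent it \emph{is} such an $A$ (its Sylow subgroups have class at most two and its commutator subgroup has odd order), and $H$ is the quaternion (or $\SL(2,3)$-type) complement. In the general case one passes to suitable characteristic sections of $\Oo_{2'}(G)$ --- guided by a Hall-system / system-normaliser analysis of $G$ --- so as to split off a nilpotent normal subgroup of class at most two with odd-order commutator, and for the $3$-element on top when $G/G_0\cong\Z_3$ one either splits it off by Schur--Zassenhaus against a normal Sylow subgroup of $\Oo_{2'}(G)$ or absorbs it into $H$. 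Where no such decomposition is available, the brace is instead built directly as a semidirect product of braces, from a brace on $G_0$ and a brace on the quaternion factor chosen from the order-$8$ classification so that the two multiplicative actions match.

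I expect the genuine obstacle to be two-fold and concentrated in the last paragraph. First, when $\Oo_{2'}(G)$ is not nilpotent one must show that its normal structure --- and the left braces on it supplied by Theorem~\ref{thm-Q-8}, which are assembled from local Sylow data --- can be arranged compatibly with the outer action of $\Q_8$ (or $\SL(2,3)$), so that the additive group is stabilised and a semidirect product of braces can be formed; this forces one to revisit the construction behind Theorem~\ref{thm-Q-8} and to exert fine control over the arithmetic and normal structure of $G$. Second, the extension $G/G_0\cong\Z_3$, and more generally the non-split situations, must be handled without the luxury of a complement, which is precisely where the detailed structural results of the paper, rather than soft brace-theoretic formalism, do the work.
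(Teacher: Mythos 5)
Your structural reduction is correct: for a soluble $G$ with Sylow $2$-subgroups isomorphic to $\Q_8$ one does get $\Oo_2(G/\Oo_{2'}(G))\cong \Q_8$ and $G/\Oo_{2'}(G)\cong \Q_8$ or $\SL(2,3)$, and the cases you dispose of (the base case, and $\Oo_{2'}(G)$ nilpotent with a genuine complement) are unproblematic. But the two points you yourself flag as ``the genuine obstacle'' are exactly where the content of the theorem lies, and neither is resolved. First, you never produce the brace on $\Q_8$ compatible with the order-$3$ outer action; ``using the classification of left braces of order $8$'' is not a proof, because what is needed is not merely \emph{some} left brace with multiplicative group $\Q_8$ but one that is $\Oo^2(\Aut(\Q_8))$-equivariant and has $\Z(\Q_8)$ inside its kernel. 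This is the paper's Lemma~\ref{lem-IYB-Q-8}, an explicit construction with $(\Q_8,+)$ elementary abelian together with a hand verification that the four order-$3$ automorphisms preserve the addition. Without it, even your base case $\SL(2,3)$ and the case $G/G_0\cong \Z_3$ with $\Oo_{2'}(G)$ nilpotent remain open, since $[\Q_8,\Q_8]$ has even order and the $A\rtimes H$ building block you invoke does not apply with $A=\Q_8$.

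Second, and more seriously, the plan of ``splitting off'' nilpotent normal subgroups cannot work as stated: a soluble group with Sylow subgroups of class at most two is in general \emph{not} an iterated split extension of nilpotent groups. The decomposition the paper actually has (Theorem~\ref{Ndec}) only controls the intersections, $A_1\cdots A_i\cap A_{i+1}\cdots A_k\le A_1'\cdots A_i'$, sharpened in Lemma~\ref{lem-nilpotency-class-2} to $\le\C_G(A_iA_{i+1})$; handling these non-trivial central intersections is precisely the role of Theorem~\ref{thm-main}, and it cannot be replaced by repeated use of the semidirect-product result you cite. The paper's proof bypasses your case division entirely: it takes the $\mathcal N$-decomposition $\{A_1,\dots,A_k\}$, notes that at most one factor $A_i$ can contain the full Sylow $2$-subgroup $P\cong\Q_8$, writes that factor as $P\times W$ with $W$ of odd order, shows $H_i=A_{i+1}\cdots A_k$ acts on $P$ through a group of odd order, and then feeds Lemmas~\ref{lem-IYB-Q-8} and~\ref{lem-odd-order-commutator} and Theorem~\ref{tA} into Theorem~\ref{thm-main}. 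So your proposal is a plausible outline of a different route, but it is missing the proof at both of its critical junctures.
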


Recall that, for a skew left brace $(B,+,\cdot)$, the multiplicative group $(B,\cdot)$ acts on the additive group $(B,+)$ by automorphisms: for every $a\in B$, the map $\lambda_a \colon B \rightarrow B$, given by $\lambda_a(b) = -a + ab$, is an automorphism of $(B,+)$ and the map $\lambda\colon (B,\cdot) \rightarrow \Aut(B,+)$ which sends $a \mapsto \lambda_a$ is a group homomorphism (\cite{Rump2007}). Thus, for every $a,b\in B$ it holds
\begin{equation}
\label{eq:producte-lambda} a b = a+ \lambda_a(b).
\end{equation}
We denote $\Ker(B):= \ker \lambda $, the \emph{kernel of the group action}, and 
$$\Soc(B):=\Ker(B) \cap \Z(B,+)=\{a \in B \mid ab=a+b=b+a, \forall b \in B\}$$
\emph{the socle of $B$}, which is a subbrace of $B$.

\begin{definition}[{\cite[Definition~4]{BEJP2023}}]
\begin{enumerate}
\item Let $A$ be a group and let $(B,+,\cdot)$ be a skew left brace . We say that $A$ \emph{acts on the skew left brace} $(B,+,\cdot)$ if there is an action of $A$ on the set $B$ such that $a \cdot(g+h) = a\cdot g + a\cdot h$ and $a\cdot(gh) = (a\cdot g)(a\cdot h)$, for all $g, h \in B$, that is, if the action of $A$ on the set $B$ is actually an action of $A$ on the group $(B,+)$ and an action of $A$ on the group  $(B,\cdot)$.

\item An action of a group $A$ on an $\XX\YB$-group $(G, \cdot)$, for which it is understood that the associated skew left brace of $\XX$-type  is $(G, {+}, {\cdot})$, is said to be \emph{equivariant} if $A$ acts on the skew left brace $(G,+,\cdot)$. In this case, we say that $G$ has an \emph{$A$-equivariant} $\XX\YB$-structure.
\end{enumerate}
\end{definition}

The proof of Theorem~\ref{thm-Q-8}
heavily depends on two extensions of \cite[Theorem~A]{MengBallesterEstebanFuster2021} and \cite[Theorem~A]{BEJP2023}, which are  of independent interest. They hold for arbitrary (non-necessarily finite) groups.

\begin{theorem}\label{tA}
Let $\mathfrak{X}$ be a class of groups that is closed under taking quotients and direct products, and let the group $G = NH$ be the product of subgroups $N$ and $H$ with $N \trianglelefteq G$ and $N \cap H \leq \Z(N)$. Suppose that $N$ and $H$ are both $\mathfrak{X}\YB$-groups with, respectively, associated skew left braces $(N,+, \cdot)$, $(H,+, \cdot)$ satisfying the following conditions:

\begin{enumerate}
    \item $N \cap H \leq \operatorname{Ker}(N) \cap \operatorname{Ker}(H)$.
    \item $(N \cap H, +) \leq \Z(N, +) \cap \Z(H, +)$.
    \item The action of $H$ on $N$ by conjugation in $G$ is equivariant.
\end{enumerate}
Then, $G$ is an $\mathfrak{X}\YB$-group such that $\operatorname{Soc}(N) \operatorname{C}_{\operatorname{Soc}(H)}(N) \subseteq\operatorname{Soc}(G)$. Furthermore, the associated skew left braces $(N,+, \cdot)$ and $(H,+, \cdot)$ are subbraces of the associated skew left brace $(G,+, \cdot)$.
\end{theorem}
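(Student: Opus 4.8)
The plan is to build a skew left brace $(G,+,\cdot)$ of $\mathfrak{X}$-type whose additive group is a central product of $(N,+)$ and $(H,+)$ amalgamated along $D:=N\cap H$, and then to read off the socle inclusion. I begin by extracting the consequences of the hypotheses. From (1) and (2) we get $D\le\Ker(N)\cap\Z(N,+)=\Soc(N)$ and $D\le\Ker(H)\cap\Z(H,+)=\Soc(H)$, and together with the standing assumption $D\le\Z(N,\cdot)$ this gives two facts used throughout: for every $d\in D$ one has $\lambda_n(d)=d$ for all $n\in N$ (a socle element of $N$ lying in $\Z(N,\cdot)$ is fixed by every $\lambda_n$, since $nd=dn=d+n=n+d$), and $\lambda_h(d)=hdh^{-1}$ for all $h\in H$ (any socle element $d$ of a brace satisfies $\lambda_h(d)=hdh^{-1}$, and $hdh^{-1}\in N\cap H=D$ because $N\trianglelefteq G$). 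In particular the two additions $+$ restrict to the same operation on $D$, which there agrees with $\cdot$, so $-d=d^{-1}$ for $d\in D$; also $\lambda_d=\mathrm{id}$ on both $N$ and $H$, and conjugation by $d$ is trivial on $N$.

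For the additive group, put $\bar G:=(N,+)\times(H,+)$, which lies in $\mathfrak{X}$ since $\mathfrak{X}$ is closed under direct products, and let $\Delta:=\{(d,-d):d\in D\}$, a subgroup isomorphic to $(D,+)$ and central in $\bar G$ because $D\le\Z(N,+)\cap\Z(H,+)$; hence $\hat G:=\bar G/\Delta\in\mathfrak{X}$. I would check that $\phi\colon\hat G\to G$, $\overline{(n,h)}\mapsto nh$, is well defined (using $n+d=nd$ in $N$ and $-d+h=d^{-1}h$ in $H$ for $d\in D$, which hold as $d\in\Soc(N)\cap\Z(N,\cdot)$ resp.\ $d\in\Soc(H)$) and bijective: it is onto since $G=NH$, and if $n_1h_1=n_2h_2$ then $d:=n_1^{-1}n_2\in D$, $n_2=n_1d$, $h_2=d^{-1}h_1$, whence $\overline{(n_2,h_2)}=\overline{(n_1,h_1)}$. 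Transporting the group operation of $\hat G$ along $\phi$ gives an addition $+$ on $G$ with $(G,+)\cong\hat G\in\mathfrak{X}$, with $0=1$, with $-(nh)=(-n)(-h)$, and, for any factorisations, \[(n_1h_1)+(n_2h_2)=(n_1+n_2)(h_1+h_2);\] taking one component trivial shows $(N,+)$ and $(H,+)$ are subgroups carrying their original additions.

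It then remains to verify the skew brace identity $a(b+c)=ab-a+ac$, which I would do by writing $a=n_ah_a$, $b=n_bh_b$, $c=n_ch_c$ and comparing the $N$- and $H$-parts produced by the displayed addition. Since $N\trianglelefteq G$ we have $ab=\bigl(n_a\,{}^{h_a}n_b\bigr)(h_ah_b)$ with ${}^{h_a}n_b=h_an_bh_a^{-1}$, so the right-hand side equals the product of $n_a\,{}^{h_a}n_b-n_a+n_a\,{}^{h_a}n_c\in N$ and $h_ah_b-h_a+h_ah_c\in H$; on the left, $a\bigl((n_b+n_c)(h_b+h_c)\bigr)=n_a\,{}^{h_a}(n_b+n_c)\cdot h_a(h_b+h_c)$, where the equivariance hypothesis (3) makes conjugation by $h_a$ an automorphism of $(N,+)$, so ${}^{h_a}(n_b+n_c)={}^{h_a}n_b+{}^{h_a}n_c$, and applying the brace identities of $N$ and of $H$ separately yields exactly the same two factors. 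Thus $(G,+,\cdot)$ is a skew left brace of $\mathfrak{X}$-type, $G$ is an $\mathfrak{X}\YB$-group, and $(N,+,\cdot)$, $(H,+,\cdot)$ are subbraces. For the socle: if $a\in\Soc(N)$ and $x=n'h'\in G$ then $ax=(an')h'=(a+n')h'=a+x$ and $x+a=(n'+a)h'=(a+n')h'=a+x$, using $a\in\Soc(N)\le\Z(N,+)$, so $a\in\Soc(G)$; similarly, if $b\in\Soc(H)$ centralises $N$ then $bx=b+x=x+b$, so $b\in\Soc(G)$. Since $\Soc(G)$ is a subgroup, $\Soc(N)\,\C_{\Soc(H)}(N)\subseteq\Soc(G)$.

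The step I expect to be the main obstacle is the construction in the second paragraph: making the additive structure well defined across the amalgamation by the possibly nontrivial $D=N\cap H$. This is exactly what forces conditions (1) and (2), and it is controlled by the two elementary facts about socle elements recorded at the outset, which guarantee that the identifications imposed by $\Delta$ are respected by multiplication, by the relevant $\lambda$'s, and by conjugation. Once the bijection $\phi$ is in hand, the verification of the brace identity and of the socle statement is routine, collapsing — via equivariance — to the brace identities already available in $N$ and in $H$.
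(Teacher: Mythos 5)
Your proposal is correct and takes essentially the same route as the paper: the paper simply invokes \cite[Theorem~A]{BEJP2023}, whose underlying construction is exactly your amalgamated central product $(N,+)\times(H,+)/\{(d,-d):d\in N\cap H\}$ with the resulting addition $(n_1h_1)+(n_2h_2)=(n_1+n_2)(h_1+h_2)$, and then obtains the socle containment by showing $\Soc(N)\C_{\Soc(H)}(N)\subseteq \Ker(G)\cap\Z(G,+)$, which is equivalent to your direct verification that $ax=a+x=x+a$. The only difference is that you write out in full the details the paper delegates to the cited theorem.
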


\begin{proof}
Most part of results follows from ~\cite[Theorem~A]{BEJP2023} and we only have to show that $\Soc(N)\C_{\Soc(H)}(N) \subseteq \Soc(G)$.
Note that, in the proof of~\cite[Theorem~A]{BEJP2023}, we get that $G=NH$ is an $\mathfrak{X}\YB$-group with the addition "+" defined by
$n_1h_1+n_2h_2=(n_1+n_2)(h_1+h_2),~~~n_1,n_2 \in N, h_1,h_2 \in H$
and  $\Soc(N)\C_{\Soc(H)}(N) \subseteq \Ker(N) \C_{\Ker(H)}(N) \subseteq \Ker(G)$.  By the definition of the addition, we observe that $\Z(N,+)\Z(H,+) \subseteq \Z(G,+)$, which implies that $\Soc(N)\C_{\Soc(H)}(N) \subseteq \Z(N,+)\Z(H,+) \subseteq \Z(G,+)$, as desired.
\end{proof}




\begin{athm}\label{thm-main}\label{thm-skew-nilpotent-type}
Let $\mathfrak{X}$ be a class of groups that is closed under taking quotients and direct products, and let a group $A$ act on a group $(G, \cdot)$. Suppose that $G=A_1A_2\cdots A_k$ is a product of some $A$-invariant subgroups $A_i$,  $1 \leq i \leq k$. Write $H_0=G$, $H_i=A_{i+1}\cdots A_k$, for each $i \in \{1, \cdots,k-1\}$, and $A_{k+1} = H_k = 1$.
Suppose that $A_i$ has an $A$-equivariant $\XX\YB$-structure
$(A_i, +, \cdot)$
for each $i \in \{1, \cdots,k\}$, satisfying
the following conditions for each $i \in \{1, \cdots, k\}$:
\begin{itemize}
\item[\emph{(1)}] $A_i$ is normalised by $H_{i}$;
\item[\emph{(2)}] $(A_i, +, \cdot)$ is an $H_i$-equivariant $\XX\YB$-structure of $A_i$ with the respect to the action by conjugation of $H_i$ on $A_i$: ${}^ha=hah^{-1}$ for $h \in H_i, a \in A_i$;
\item[\emph{(3)}] $A_1A_2\cdots A_i \cap H_i \leq \C_G(A_iA_{i+1})$;
\item[\emph{(4)}] $\Z(A_i)\leq \Soc(A_i)$.
\end{itemize}
Then $G$ has an $A$-equivariant $\XX\YB$-structure $(G, +, \cdot)$ satisfying the following property:

(*) Suppose that $a=a_1a_2\cdots a_k\in \Z(A_1)\Z(A_2)\cdots \Z(A_k)$, where $a_i\in \Z(A_i)$, and $a_ia_{i+1}\cdots a_k\in \C_G(A_{i-1})$, for each $i=1, \cdots,k$. Then  $a \in \Soc(G)$.

Furthermore, $(A_i,+,\cdot)$ is a subbrace of $(G,+,\cdot)$ for every $i \in \{1, \cdots,k\}$.
\end{athm}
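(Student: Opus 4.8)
The plan is to argue by induction on the number $k$ of factors, using Theorem~\ref{tA} to carry out the inductive step. Before that, a little structural bookkeeping. Since $A_j$ is normalised by $H_j$ for every $j$, each partial product $A_1A_2\cdots A_i$ is a subgroup, and it is normal in $G$: it normalises itself and, by condition~(1), is normalised by $H_i$, while $G=(A_1\cdots A_i)H_i$. In particular $A_1\trianglelefteq G$, the set $H_1=A_2\cdots A_k$ is a subgroup, and $G=A_1H_1$. The base case $k=1$ is immediate: then $H_1=1$, the structure we want on $G=A_1$ is the one we are given, and property~(*) asserts exactly $\Z(A_1)\subseteq\Soc(A_1)$, which is condition~(4).

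Assume now $k\geq 2$. The first step is to apply the induction hypothesis to $H_1=A_2\cdots A_k$, with the same acting group $A$ and factors $A_2,\dots,A_k$: conditions~(1), (2), (4) are inherited verbatim (note that the relevant $H$-subgroups of the subproblem are again the $H_j$), and condition~(3) holds because $A_2\cdots A_i\cap H_i\subseteq A_1\cdots A_i\cap H_i\leq\C_G(A_iA_{i+1})$. So $H_1$ acquires an $A$-equivariant $\XX\YB$-structure $(H_1,+,\cdot)$ satisfying property~(*) for $H_1$ and having each $(A_j,+,\cdot)$, $j\geq 2$, as a subbrace. The second step is to feed $G=A_1H_1$ into Theorem~\ref{tA} with $N=A_1$ and $H=H_1$. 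Condition~(3) with $i=1$ gives $A_1\cap H_1\leq\C_G(A_1A_2)\cap A_1=\Z(A_1)$, so $N\cap H\leq\Z(N)$; hypothesis~(3) of Theorem~\ref{tA} is precisely condition~(2) with $i=1$; and the inclusions $A_1\cap H_1\subseteq\Soc(A_1)$ (from condition~(4)) and $A_1\cap H_1\subseteq\Soc(H_1)$ (the crux, handled below) supply hypotheses~(1) and (2) of Theorem~\ref{tA}, since the socle of any skew left brace lies in both its kernel and its additive centre. Theorem~\ref{tA} then produces an $\XX\YB$-structure on $G$ with addition $n_1h_1+n_2h_2=(n_1+n_2)(h_1+h_2)$, having $(A_1,+,\cdot)$ and $(H_1,+,\cdot)$ — hence, by transitivity, every $(A_j,+,\cdot)$ — as subbraces, and with $\Soc(A_1)\,\C_{\Soc(H_1)}(A_1)\subseteq\Soc(G)$. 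Since $A$ stabilises both $A_1$ and $H_1$ and acts equivariantly on each, that explicit addition formula gives $a\cdot(u+v)=a\cdot u+a\cdot v$ for $a\in A$ at once, so the new structure is $A$-equivariant.

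The main obstacle, and essentially the only real work, is the inclusion $A_1\cap H_1\subseteq\Soc(H_1)$. For $x\in A_1\cap H_1$ I would manufacture the decomposition that property~(*) for $H_1$ needs by peeling off one factor at a time: using $H_{j-1}=A_jH_j$, set $h_1=x$ and recursively write $h_{j-1}=a_jh_j$ with $a_j\in A_j$ and $h_j\in H_j$. Then $h_j=a_j^{-1}h_{j-1}$ lies in $H_j$ and, since $h_{j-1}\in A_1\cdots A_{j-1}$ by induction and $A_1\cdots A_{j-1}\trianglelefteq G$, also in $A_1\cdots A_j$; hence condition~(3) with $i=j$ makes $h_j$ centralise $A_jA_{j+1}$. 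Consequently $a_j=h_{j-1}h_j^{-1}$ centralises $A_j$ — $h_j$ does by the previous sentence, $h_{j-1}$ does by condition~(3) with $i=j-1$ — so $a_j\in\Z(A_j)$; and $a_ja_{j+1}\cdots a_k=h_{j-1}$ centralises $A_{j-1}$, again by condition~(3). Since $x=a_2a_3\cdots a_k$, property~(*) for $H_1$ yields $x\in\Soc(H_1)$.

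Finally, property~(*) for $G$ follows by the very same splitting: given $a=a_1\cdots a_k$ as in~(*), the element $y=a_2\cdots a_k$ meets the hypotheses of~(*) for $H_1$, so $y\in\Soc(H_1)$; the case $i=2$ of the hypothesis says $y\in\C_G(A_1)$, whence $y\in\C_{\Soc(H_1)}(A_1)$; and $a_1\in\Z(A_1)\subseteq\Soc(A_1)$ by condition~(4), so $a=a_1y\in\Soc(A_1)\,\C_{\Soc(H_1)}(A_1)\subseteq\Soc(G)$, which closes the induction. The only delicate point throughout is purely combinatorial: tracking which partial product $A_1\cdots A_j$ each $h_j$ sits in, so that condition~(3) can be invoked with the correct index.
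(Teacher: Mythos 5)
Your proposal is correct and follows essentially the same route as the paper: induction on $k$, applying the inductive hypothesis to $H_1=A_2\cdots A_k$, reducing the inductive step to Theorem~\ref{tA} with $N=A_1$, $H=H_1$, and doing the real work in showing $A_1\cap H_1\leq\Soc(A_1)\cap\Soc(H_1)$ by decomposing an element of $A_1\cap H_1$ along the factors and feeding it to Property~(*) for $H_1$. Your treatment of the base case and of the $A$-equivariance of the resulting structure is slightly more explicit than the paper's, but the argument is the same.
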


\begin{proof}
We argue by induction on $k$. It is clear that we may assume that  $k\geq 2$. It is rather clear that $H_1 = A_2\cdots A_{k}$ satisfies the hypotheses of the theorem. Then, by induction, $H_1$ has an $A$-equivariant $\XX\YB$-structure $(H_1,+,\cdot)$ satisfying Property~(*) and $(A_i,+,\cdot)$ are subbraces of $(H_1,+,\cdot)$ for every $i \in \{2, \cdots,k\}$.

Furthermore, $G = A_1H_1$, $H_1$ normalises $A_1$ and the $A$-equivariant $\XX\YB$-structure
$(A_1,+,\cdot )$ is $H_1$-equivariant with the respect to the action by conjugation of $H_1$ on
$A_1$ by Statement~(2). Let  $a_1=a_2\cdots a_{k} \in A_1\cap H_1$, where
$a_i \in A_i$,  $1\leq i \leq k$. Then $(a_1)^{-1}a_2\cdots a_k=1$. If $i \in \{2, \cdots, k-1\}$, then $(a_1)^{-1}a_2\cdots a_{i-1} a_i=(a_{i+1}\cdots a_{k})^{-1} \in A_1\cdots A_i\cap A_{i+1}\cdots A_{k}\leq \C_G(A_iA_{i+1})\leq \C_G(A_{i})$ and $(a_1)^{-1}a_2\cdots a_{i-1} = (a_{i}\cdots a_{k})^{-1}\in \C_G(A_{i-1}A_{i})\leq \C_G(A_{i})$ by Statement~(3). Therefore, $a_{i}\in \Z(A_{i})$. Now, $a_1\cdots a_{i-1}=(a_ia_{i+1}\cdots a_{k})^{-1}\in \C_G(A_{i-1})$ and $a_ia_{i+1}\cdots a_{k}\in \C_{G}(A_{i-1})$. Since $H_1$ satisfies Property~(*), $a_2\cdots a_k\in \Soc (H_1)$. Note that  $A_1\cap H_1\leq \C_G(A_1A_2)\cap A_1\leq \Z(A_1)\leq \Soc (A_1)$ by Statement~(4). Consequently, $A_1 \cap H_1 \leq \Soc (A_1) \cap \Soc (H_1)$. Applying Theorem~\ref{tA}, $G$ has an $A$-equivariant $\XX\YB$-structure $(G, +, \cdot)$ such that $\Soc(A_1)\C_{\Soc (H_1)}(A_1)\leq \Soc (G)$ and $(A_1, +, \cdot)$ and $(H_1,+,\cdot)$ are subbraces of $(G,+,\cdot)$. Hence $(A_i,+,\cdot)$ is a subbrace of $(G,+,\cdot)$, for every $i \in \{1, \cdots,k\}$.

Assume that $a=a_1a_2\cdots a_k\in \Z(A_1)\Z(A_2)\cdots \Z(A_k)$, where $a_i\in \Z(A_i)$, and $a_ia_{i+1}\cdots a_k\in \C_G(A_{i-1})$, for each $i=1, \cdots,k$. Then $a_2\cdots a_k\in \C_G(A_1)$ and $a_2\cdots a_k\in \Soc (H_1)$. Thus $a_1a_2\cdots a_k\in \Soc(A_1)\C_{\Soc (H_1)}(A_1)\leq \Soc (G)$. This shows that the skew left brace $(G, +, \cdot)$ has Property~(*).
\end{proof}

\emph{From now on, all groups considered will be finite.}

\section{Structural results: proof of Theorems~\ref{thm-Q-8} and~\ref{thm-Sylow-Q-8}}

This section contains the results needed for the proof of Theorem~\ref{thm-Q-8}. Our strategy is to show that a soluble group with Sylow subgroups of class at most two satisfies the structural hypotheses of Theorem~\ref{thm-main}. The elegant results developed by Hall to establish the foundations of the theory of soluble groups play a key role (\cite[Chapter~I]{DoerkHawkes1992}).

Let $G$ be a group. A \emph{Hall subgroup} of $G$ is a subgroup $H$ of $G$ such that
$(|G : H|, |H|) = 1$. If $p$ is a prime and $H$ is a Hall subgroup of $G$ such that $|G : H|$ is a power of $p$, then  $G = HP$ and $H \cap P = 1$ for every Sylow $p$-subgroup $P$ of $G$. For this reason, $H$ is sometimes called a \emph{Sylow $p$-complement} of $G$.

A celebrated result of Hall states that the existence of Sylow $p$-complements for each prime $p$ is a characteristic property of soluble groups, it is therefore not surprising that in the structural study of such groups Sylow complements plays a fundamental role (\cite[Theorems~I.3.3 and~I.3.5]{DoerkHawkes1992}).

A set $\mathcal{K}$ comprising the group $G$ together with exactly one Sylow $p$-complement of $G$ for each prime $p$ in the set $\pi(G)$ of all prime divisors of the order of $G$ is called a \emph{complemented basis} of $G$. The above result of Hall shows that $G$ is soluble if and only if $G$ has complemented basis and, in this case, $G$ has a transitive permutation representation when it acts by conjugation on the set of its complemented basis (\cite[Theorems~I.4.10 and~4.11]{DoerkHawkes1992}).

A \emph{system normaliser} of a soluble group $G$ is a stabiliser of this representation, that is, if  $\mathcal{K}$ be a complement basis of $G$, the system normaliser of $\mathcal{K}$ is  the intersection of the normalisers of every element of $\mathcal{K}$. It is denoted by $\N_G(\mathcal{K})$. It is worth noting that the system normalisers of $G$ are just the $\mathcal{N}$-normalisers of $G$ for the saturated formation $\mathcal{N}$ of all nilpotent groups (\cite[Section~3]{DoerkHawkes1992}).

According to \cite[Theorem~I.5.2]{DoerkHawkes1992}, the system normalisers of a soluble group $G$ form a characteristic conjugacy class of subgroups of $G$.

Let $G$ be a non-necessarily soluble group and let $F$ be a soluble subgroup of $G$. If $\mathcal{T}$ is a complement basis of $F$, then $\N_G(\mathcal{T}) = \{g \in G : H = H^g$ for all   $H \in \mathcal{T}\}$ is called \emph{system normaliser of $\mathcal{T}$  relative to $G$} (\cite{Hall1940}). Note that if $F$ is a normal subgroup of $G$, then $G = F\N_G(\mathcal{T})$ by the Frattini Argument.





The following lemma is a slight generalization of \cite[Theorem~I.5.8]{DoerkHawkes1992} and we give a proof here for completeness.

\begin{lemma}\label{lem-system-normaliser}
Let $G$ be a group and let $F$ be a normal soluble subgroup of $G$.
Let $\mathcal{K}=\{F_{p'} \mid p \in \pi(F)\}$ be a complement basis of $F$. Let $N$ be a normal subgroup of $G$ contained in $F$. Then $\N_G(\mathcal{K})N/N$ is the system normaliser of the complement basis $\mathcal{K}N/N=\{F_{p'}N/N \mid p \in \pi(F/N)\}$ of $F/N$ relative to $G/N$.
\end{lemma}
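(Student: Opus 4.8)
The plan is to reduce the relative statement to the absolute case $G=F$, which is \cite[Theorem~I.5.8]{DoerkHawkes1992}, using the Frattini decomposition $G=F\,\N_G(\mathcal{K})$ recorded above for $F\trianglelefteq G$.

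First I would verify that $\mathcal{K}N/N$ is indeed a complement basis of $F/N$. For each $p\in\pi(F/N)\subseteq\pi(F)$ this is a routine order count: $|F_{p'}N/N|=|F_{p'}:F_{p'}\cap N|$ divides $|F_{p'}|=|F|_{p'}$, so it is coprime to $p$; and $|F/N:F_{p'}N/N|=|F:F_{p'}N|$ divides $|F:F_{p'}|=|F|_p$, so it is a power of $p$. As $|F/N|$ is the product of these two coprime numbers, $F_{p'}N/N$ is a Sylow $p$-complement of $F/N$, and hence $\mathcal{K}N/N$ consists of $F/N$ together with exactly one Sylow $p$-complement for each $p\in\pi(F/N)$. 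Since $N\trianglelefteq F$ and $\mathcal{K}N/N$ is a complement basis of $F/N$, \cite[Theorem~I.5.8]{DoerkHawkes1992} applied inside $F$ gives the absolute identity $\N_{F/N}(\mathcal{K}N/N)=\N_F(\mathcal{K})N/N$.

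Next I would combine this with the two identities $\N_G(\mathcal{K})\cap F=\N_F(\mathcal{K})$ and $\N_{G/N}(\mathcal{K}N/N)\cap(F/N)=\N_{F/N}(\mathcal{K}N/N)$, both immediate from the definition of a relative system normaliser, and with $G=F\,\N_G(\mathcal{K})$. The inclusion $\N_G(\mathcal{K})N/N\subseteq\N_{G/N}(\mathcal{K}N/N)$ is clear, since an element of $\N_G(\mathcal{K})$ normalises each $F_{p'}$ and hence each $F_{p'}N/N$. For the reverse inclusion, take $\bar g\in\N_{G/N}(\mathcal{K}N/N)$ with preimage $g\in G$ and write $g=fx$ with $f\in F$, $x\in\N_G(\mathcal{K})$; then $\bar f=\bar g\,\bar x^{-1}\in\N_{G/N}(\mathcal{K}N/N)\cap(F/N)=\N_{F/N}(\mathcal{K}N/N)=\N_F(\mathcal{K})N/N$, so $f\in\N_F(\mathcal{K})N$, whence $g=fx\in\N_F(\mathcal{K})Nx\subseteq\N_G(\mathcal{K})N$ and $\bar g\in\N_G(\mathcal{K})N/N$. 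This proves the lemma.

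The only ingredient with real content is the absolute quotient formula $\N_{F/N}(\mathcal{K}N/N)=\N_F(\mathcal{K})N/N$ inside $F$, taken from \cite[Theorem~I.5.8]{DoerkHawkes1992}; everything else is coset manipulation and the Frattini argument. I therefore do not expect a serious obstacle: once the absolute case and the decomposition $G=F\,\N_G(\mathcal{K})$ are in hand, the relative case follows formally, which is precisely why the lemma is only a slight generalisation of the cited result.
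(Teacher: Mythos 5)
Your proof is correct, but it takes a genuinely different route from the paper. You reduce the relative statement to the absolute case $G=F$, quoting \cite[Theorem~I.5.8]{DoerkHawkes1992} for the identity $\N_{F/N}(\mathcal{K}N/N)=\N_F(\mathcal{K})N/N$, and then deduce the relative version by pure coset manipulation from the Frattini factorisation $G=F\,\N_G(\mathcal{K})$ together with the trivial identities $\N_G(\mathcal{K})\cap F=\N_F(\mathcal{K})$ and $\N_{G/N}(\mathcal{K}N/N)\cap (F/N)=\N_{F/N}(\mathcal{K}N/N)$; all of these steps check out, including the final containment $\N_F(\mathcal{K})Nx\subseteq\N_G(\mathcal{K})N$ (which uses normality of $N$ in $G$). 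The paper instead argues directly and self-containedly: it reduces to the case where $N$ is a minimal normal subgroup of $G$, hence an elementary abelian $q$-group, and for $\bar g$ in the relative normaliser of $\mathcal{K}N/N$ it corrects $g$ by an element $n\in N$ using the conjugacy of Hall $q'$-subgroups of the soluble group $F_{q'}N$ (the primes $p\neq q$ being handled by the observation $N\leq F_{p'}$). Your argument is shorter and makes transparent that the relative statement is a formal consequence of the absolute one, at the price of leaning on the cited theorem as a black box; the paper's argument re-derives the essential content (it is offered ``for completeness'') and only invokes the conjugacy of Hall subgroups. Both are valid proofs of the lemma.
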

\begin{proof}
Without loss of generality, we may assume that $N$ is a minimal normal subgroup of $G$. Then $N$ is an elementary abelian $q$-subgroup for some prime $q$.
As $\N_G(\mathcal{K})$ normalises each $F_{p'}$ in $\mathcal{K}$, $\N_G(\mathcal{K})N/N$ normalises each $F_{p'}N/N$ in $\mathcal{K}^\ast$. Hence
$\N_G(\mathcal{K})N/N \leq \N_{G/N}(\mathcal{K}^\ast).$ Conversely, for each $gN \in \N_{G/N}(\mathcal{K}^\ast)$ with $g \in G$,
$F_{p'}^gN=F_{p'}N$ for each prime $p$.
Assume that $p = q$. Then $F_{q'}^g$ and $F_{q'}$ are two $q'$-Hall subgroups of the soluble group $F_{q'}^g N=F_{q'}N$, there exists $n \in N$ such that $F_{q'}^{gn}=F_{q'}$. Write $x=gn$. Then $gN = xN$ normalises $F_{q'}N/N$. Assume that $p \neq q$. Then $N \leq F_{p'}^g \cap  F_{p'}$ and $F_{p'}^g=F_{p'}$. Then $gN$ normalises $F_{p'}N/N$. Therefore $gN \in \N_G(\mathcal{K})N/N$.
and $\N_{G/N}(\mathcal{K}^\ast) \leq \N_G(\mathcal{K})N/N$, as desired.
\end{proof}



\begin{corollary}\label{nilres}
Let $G$ be a group and let $F$ be a normal soluble subgroup of $G$. If $D$ is a relative system normaliser of $F$ with respect to $G$, then $G = F^{\mathcal{N}}D$.
\end{corollary}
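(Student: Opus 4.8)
The plan is to pass to the quotient modulo the nilpotent residual $F^{\mathcal{N}}$ and to exploit the fact that $F/F^{\mathcal{N}}$ is nilpotent, so that its Sylow $p$-complements are unique and hence characteristic. By definition of a relative system normaliser, $D=\N_G(\mathcal{K})$ for some complemented basis $\mathcal{K}=\{F_{p'}\mid p\in\pi(F)\}$ of $F$; since $F\trianglelefteq G$, the Frattini argument recalled above gives $G=FD$, but we need the sharper conclusion $G=F^{\mathcal{N}}D$.

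First I would note that $F^{\mathcal{N}}$ is characteristic in $F$, and since $F\trianglelefteq G$ this forces $F^{\mathcal{N}}\trianglelefteq G$, with $F^{\mathcal{N}}\leq F$. Hence Lemma~\ref{lem-system-normaliser} applies with $N=F^{\mathcal{N}}$: the subgroup $DF^{\mathcal{N}}/F^{\mathcal{N}}$ is the system normaliser, relative to $G/F^{\mathcal{N}}$, of the complemented basis $\mathcal{K}F^{\mathcal{N}}/F^{\mathcal{N}}=\{F_{p'}F^{\mathcal{N}}/F^{\mathcal{N}}\mid p\in\pi(F/F^{\mathcal{N}})\}$ of the nilpotent group $F/F^{\mathcal{N}}$.

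The key step is then the observation that, in a nilpotent group, every Sylow subgroup is normal, so each Sylow $p$-complement $F_{p'}F^{\mathcal{N}}/F^{\mathcal{N}}$ is just the product of the Sylow $q$-subgroups of $F/F^{\mathcal{N}}$ over the primes $q\neq p$; in particular it is the unique Hall $p'$-subgroup of $F/F^{\mathcal{N}}$ and therefore characteristic in $F/F^{\mathcal{N}}$. As $F/F^{\mathcal{N}}\trianglelefteq G/F^{\mathcal{N}}$, each $F_{p'}F^{\mathcal{N}}/F^{\mathcal{N}}$ is normal in $G/F^{\mathcal{N}}$, so every element of $G/F^{\mathcal{N}}$ normalises every member of $\mathcal{K}F^{\mathcal{N}}/F^{\mathcal{N}}$. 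Thus the relative system normaliser in question is the whole of $G/F^{\mathcal{N}}$, i.e.\ $DF^{\mathcal{N}}/F^{\mathcal{N}}=G/F^{\mathcal{N}}$, and consequently $G=DF^{\mathcal{N}}=F^{\mathcal{N}}D$, since $F^{\mathcal{N}}\trianglelefteq G$.

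I do not expect a substantial obstacle here: the whole argument is a quotient-plus-uniqueness reduction resting on Lemma~\ref{lem-system-normaliser}. The only point deserving a line of care is the verification that the relevant Sylow complements of $F/F^{\mathcal{N}}$ are genuinely characteristic there, so that their normality is inherited in $G/F^{\mathcal{N}}$; this is immediate from the uniqueness of Hall subgroups in nilpotent groups.
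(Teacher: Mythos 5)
Your proof is correct and follows essentially the same route as the paper: both apply Lemma~\ref{lem-system-normaliser} with $N=F^{\mathcal{N}}$ and exploit the normality of Sylow complements in the nilpotent quotient $F/F^{\mathcal{N}}$. The only (harmless) difference is that you work with the relative system normaliser $DF^{\mathcal{N}}/F^{\mathcal{N}}$ in $G/F^{\mathcal{N}}$ and conclude it is all of $G/F^{\mathcal{N}}$ directly, whereas the paper first passes to the absolute system normaliser $H=D\cap F$ of $F$ to get $F=F^{\mathcal{N}}H$ and then combines this with the Frattini argument $G=FD$.
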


\begin{proof}
We have that $G = FD$ by the above observation. On the other hand, $H = D \cap F$ is a system normaliser of $F$ and, by Lemma~\ref{lem-system-normaliser}, $HF^{\mathcal{N}}/F^{\mathcal{N}}$ is a system normaliser of the nilpotent group $F/F^{\mathcal{N}}$. Since every $p$-complement of a nilpotent group is normal, it follows that $HF^{\mathcal{N}}/F^{\mathcal{N}} = F/F^{\mathcal{N}}$. Hence $F = F^{\mathcal{N}}H$ and so $G = F^{\mathcal{N}}D$.
\end{proof}


Suppose that a group $A$ acts on a group $G$ via automorphisms. Such action is called \emph{coprime} if $(|A|,|G|)=1$. According to \cite[8.2.6~(a)]{KurzweilStellmacher2004}, we have:
\begin{lemma}\label{lem-coprime-action}
Assume that the action of $A$ on a soluble group $G$ is coprime. For each prime $p$, there exists an $A$-invariant
$p$-complement of $G$.
\end{lemma}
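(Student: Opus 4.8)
The plan is to realise the action inside the semidirect product $\Gamma = G \rtimes A$ and then combine Hall's theorem with the Schur--Zassenhaus theorem. In $\Gamma$ we have $G \trianglelefteq \Gamma$, $\Gamma = GA = AG$, and the conjugation action of the subgroup $A \le \Gamma$ on $G$ is the given action; so an $A$-invariant $p$-complement of $G$ is the same as a $p$-complement of $G$ normalised by $A$. Since $G$ is soluble, Hall's theorem provides a $p$-complement $H$ of $G$, and its conjugacy part shows that the $G$-conjugacy class $\mathcal{H} = H^{G}$ is exactly the set of all $p$-complements of $G$. Because $G \trianglelefteq \Gamma$, the group $A$, and hence $\Gamma$, permutes $\mathcal{H}$ by conjugation, and the action is transitive since $G$ already is.

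Next I would examine the point stabiliser $S = \N_{\Gamma}(H)$. The transitivity of $G$ on $\mathcal{H}$ gives the Frattini-type identity $\Gamma = GS$, whence $|S : S \cap G| = |\Gamma : G| = |A|$; on the other hand $S \cap G = \N_{G}(H)$ has order dividing $|G|$. Thus $S \cap G$ is a normal subgroup of $S$ whose order is coprime to its index $|A|$ in $S$, i.e. a normal Hall subgroup of $S$.

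Now the Schur--Zassenhaus theorem furnishes a complement $B$ to $S \cap G$ in $S$, so $|B| = |A|$ and $B \cap G = B \cap (S \cap G) = 1$; hence $B$ is a complement to $G$ in $\Gamma$. As the normal Hall subgroup $G$ of $\Gamma$ is soluble, the conjugacy part of the Schur--Zassenhaus theorem applies and yields $B = A^{x}$ for some $x \in \Gamma$; writing $x = ag$ with $a \in A$ and $g \in G$ (possible since $\Gamma = AG$) and using $A^{a} = A$, we get $B = A^{ag} = A^{g}$, so we may take $x = g \in G$. Then
\[
A = B^{g^{-1}} \le S^{g^{-1}} = \N_{\Gamma}(H)^{g^{-1}} = \N_{\Gamma}\!\bigl(H^{g^{-1}}\bigr),
\]
so $A$ normalises $H^{g^{-1}}$; being a $G$-conjugate of $H$, this subgroup lies in $\mathcal{H}$, i.e. is a $p$-complement of $G$. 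Hence $H^{g^{-1}}$ is the required $A$-invariant $p$-complement.

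The step needing the most care is checking that the hypotheses of both halves of the Schur--Zassenhaus theorem hold: existence of $B$ uses only that $S \cap G$ is a normal Hall subgroup of $S$, but the conjugacy of the two complements $A$ and $B$ to $G$ in $\Gamma$ requires solubility of $G$ (equivalently of $\Gamma/G \cong A$) --- this is precisely where the solubility hypothesis on $G$ is used --- together with the bookkeeping identity $|S : S \cap G| = |A|$ coming from $\Gamma = GS$. A purely inductive alternative (factor out a minimal $A$-invariant normal subgroup $N$ of $G$, lift an $A$-invariant $p$-complement of $G/N$, and descend, treating separately the case where the Sylow $p$-subgroup of $G$ is normal) is also possible but longer, so I would present the semidirect-product argument.
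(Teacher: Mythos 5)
Your argument is correct: realising the action in $\Gamma=G\rtimes A$, applying Hall's theorem and the Frattini argument to get $\Gamma=G\,\N_\Gamma(H)$, and then using both the existence and the conjugacy parts of Schur--Zassenhaus to replace $A$ by a conjugate lying in $\N_\Gamma(H)^{g^{-1}}=\N_\Gamma(H^{g^{-1}})$ is sound at every step, including the point that only a $G$-conjugate of $H$ is needed. The paper offers no proof of this lemma at all --- it merely cites \cite[8.2.6~(a)]{KurzweilStellmacher2004} --- and your semidirect-product argument is exactly the standard proof of that cited result, so the two approaches coincide.
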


\begin{lemma}\label{lem-complement}
Assume that the action of $A$ on a soluble group $G$ is coprime. Let $F$ be a normal $A$-invariant subgroup of $G$. Then there exists a complement basis $\mathcal{K}$ of $F$ such that
\begin{itemize}
\item[(1)] $\mathbf{N}_G(\mathcal{K})$ is $A$-invariant;
\item[(2)] $G=F^{\mathcal{N}}\mathbf{N}_G(\mathcal{K})$;
\item[(3)] $F^{\mathcal{N}} \cap  \mathbf{N}_G(\mathcal{K}) \leq (F^{\mathcal{N}})'$.
\end{itemize}
\end{lemma}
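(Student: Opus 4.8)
The plan is to build $\mathcal K$ from $A$-invariant Sylow complements; with such a choice items~(1) and~(2) are nearly automatic, and the substance is concentrated in~(3). Since $F$ is $A$-invariant, the restricted action of $A$ on $F$ is coprime, so Lemma~\ref{lem-coprime-action} lets me choose, for each $p\in\pi(F)$, an $A$-invariant Sylow $p$-complement $F_{p'}$ of $F$ and set $\mathcal K=\{F\}\cup\{F_{p'}\mid p\in\pi(F)\}$ (if one requires the members of a complemented basis to be pairwise permutable, pick instead an $A$-invariant Hall system of $F$, which exists since the action is coprime, all of whose members are then $A$-invariant as $A$ preserves orders). For~(1): given $g\in\mathbf N_G(\mathcal K)$ and $a\in A$, the identity $(H^{g})^{a}=(H^{a})^{g^{a}}$ gives $F_{p'}^{g^{a}}=(F_{p'}^{a})^{g^{a}}=(F_{p'}^{g})^{a}=F_{p'}$ for every $p$, so $\mathbf N_G(\mathcal K)$ is $A$-invariant.

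Item~(2) will be immediate from Corollary~\ref{nilres}, applied to the relative system normaliser $D=\mathbf N_G(\mathcal K)$ of $F$ with respect to $G$. For~(3) I first reduce to an assertion internal to $F$: since $F^{\mathcal N}\le F$, one has $F^{\mathcal N}\cap\mathbf N_G(\mathcal K)=F^{\mathcal N}\cap H$ with $H:=F\cap\mathbf N_G(\mathcal K)$ a system normaliser of $F$, so it is enough to prove $H\cap F^{\mathcal N}\le(F^{\mathcal N})'$ for an arbitrary system normaliser $H$ of a soluble group $F$. Passing to $F/(F^{\mathcal N})'$ — whose nilpotent residual is $F^{\mathcal N}/(F^{\mathcal N})'$ and whose system normaliser is the image of $H$ by Lemma~\ref{lem-system-normaliser} — I may assume $R:=F^{\mathcal N}$ is abelian and must then prove $H\cap R=1$.

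The crux is the identity $\C_R(F)=1$ when $R=F^{\mathcal N}$ is abelian. Since $F/[R,F]$ is a central extension of the nilpotent group $F/R$ it is nilpotent, so $R=F^{\mathcal N}\le[R,F]$ and hence $R=[R,F]$; writing $R=\bigoplus_p R_p$ with each $R_p$ characteristic in $R$, this yields $R_p=[R_p,F]$ for every $p$. Now view $R_p$ as an $F/R$-module (legitimate since $R$ is abelian); as $F/R=\Oo_p(F/R)\times\Oo_{p'}(F/R)$ is nilpotent, $\Oo_{p'}(F/R)$ acts coprimely on $R_p$, and, being normal in $F/R$ and commuting with $\Oo_p(F/R)$, the coprime decomposition $R_p=[R_p,\Oo_{p'}(F/R)]\oplus\C_{R_p}(\Oo_{p'}(F/R))$ is a decomposition into $F$-submodules. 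On the summand $W_p:=\C_{R_p}(\Oo_{p'}(F/R))$ the factor $\Oo_{p'}(F/R)$ acts trivially, so $F$ acts through the $p$-group $\Oo_p(F/R)$, and $R_p=[R_p,F]$ forces $W_p=[W_p,\Oo_p(F/R)]$; but a $p$-group $S$ cannot satisfy $[W,S]=W$ on a nontrivial finite abelian $p$-group $W$ (pass to $W/\Phi(W)$ and use that $S$ fixes a nonzero vector in the dual of a nonzero $\FF_p$-space), so $W_p=1$ for every $p$ and therefore $\C_R(F)\le\bigoplus_p W_p=1$. Finally $K:=H\cap R$ is normal in $F=RH$ (in $H$ since $R\trianglelefteq F$, in $R$ since $R$ is abelian); were $K\ne1$, then $K\cap\Z(H)\ne1$ because $H$ is nilpotent, and any $1\ne z\in K\cap\Z(H)$ would be centralised by $H$ and by $R$, hence by $F$, contradicting $\C_R(F)=1$. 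Thus $H\cap R=1$, which completes~(3).

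The only genuinely non-formal step — and hence the main obstacle — is the identity $\C_{F^{\mathcal N}}(F)=1$ for abelian $F^{\mathcal N}$; everything else rests on the results already established in this section together with standard facts about soluble groups (system normalisers are nilpotent and project onto system normalisers modulo normal subgroups). If one prefers to keep the last stretch of~(3) inside the classical machinery, one can instead — once $\C_R(F)=1$ is available — invoke that a system normaliser covers the $F$-central chief factors and avoids the eccentric ones: the computation above, carried out in $F/(F^{\mathcal N})'$, shows that no chief factor of $F$ strictly between $(F^{\mathcal N})'$ and $F^{\mathcal N}$ is central, so stepping up a chief series of $F$ through those two terms gives $H\cap F^{\mathcal N}=H\cap(F^{\mathcal N})'\le(F^{\mathcal N})'$.
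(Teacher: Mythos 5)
Your proposal is correct, and for the construction of $\mathcal K$ and for items (1) and (2) it coincides with the paper's proof: both choose $A$-invariant Sylow $p$-complements of $F$ via Lemma~\ref{lem-coprime-action}, observe that $\mathbf N_G(\mathcal K)$ is then $A$-invariant, and obtain (2) from Corollary~\ref{nilres}. You also carry out the same reduction for (3), replacing $\mathbf N_G(\mathcal K)\cap F^{\mathcal N}$ by $H\cap F^{\mathcal N}$ for the system normaliser $H=F\cap\mathbf N_G(\mathcal K)$ of $F$ and passing to $F/(F^{\mathcal N})'$ via Lemma~\ref{lem-system-normaliser}, so that the residual $R=F^{\mathcal N}$ may be assumed abelian. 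The genuine divergence is in the last step: the paper settles $H\cap R=1$ by invoking \cite[Theorems~IV.5.18 and~V.4.2]{DoerkHawkes1992} (an abelian $\mathcal N$-residual is complemented by the $\mathcal N$-normalisers), whereas you prove this complementation from first principles. Your route — showing $R=[R,F]$, splitting each primary component $R_p$ coprimely under $\Oo_{p'}(F/R)$ to conclude $\C_R(F)=1$, and then using nilpotency of $H$ to extract from a nontrivial $H\cap R$ an element of $\C_R(RH)=\C_R(F)$ — is sound; the only external inputs are the standard facts that system normalisers are nilpotent, that $F=F^{\mathcal N}H$, and that a $p$-group $S$ acting on a nontrivial abelian $p$-group $W$ never has $[W,S]=W$. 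What your argument buys is independence from the formation-theoretic machinery of Chapters~IV--V of \cite{DoerkHawkes1992}; what the paper's citation buys is brevity and placement of the step in its natural context, namely the cover--avoidance property of $\mathcal N$-normalisers, which is exactly the alternative you sketch in your closing remark.
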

\begin{proof}
Since $F$ is $A$-invariant and $F$ is soluble,  it follows from Lemma~\ref{lem-coprime-action} that there exists an $A$-invariant $p$-complement $F_{p'}$ of $F$ for each $p \in \pi(F)$. Set $\mathcal{K}=\{F_{p'} \mid  p \in \pi(F)\}$. Then $\mathcal{K}$ is a complemented basis of $F$. Let $H = \mathbf{N}_G(\mathcal{K})$ be the relative normaliser of $\mathcal{K}$ relative to $G$. Clearly $H$ is $A$-invariant as $F_{p'}$ is $A$-invariant for each $p \in \pi(F)$, and Statement~(1) holds.
Statement~(2) follows from Corollary~\ref{nilres}. Write $N=F^{\mathcal{N}}$ and $D = H \cap F$. Then $D$ is a system normaliser of $\mathcal{K}$ and $DN'/N'$ is the system normaliser of
$\mathcal{K}N'/N'$ of $F/N'$ by Lemma~\ref{lem-system-normaliser}. Applying Corollary~\ref{nilres}, $F/N' = (N/N')(DN'/N')$. Since $N/N'$ is the nilpotent residual of $F/N'$ and it is abelian, we can apply \cite[Theorems~IV.5.18 and~V.4.2]{DoerkHawkes1992}, to conclude that $N/N' \cap DN'/N' = 1$, that is, $N \cap D \leq N'$. Hence $N \cap H \leq N'$ and Statement~(3) holds.
\end{proof}

The following result is the key lemma to deal with $Q_8$-free groups. It is \cite[Theorem~4]{SeitzWright1969} for the formation $\mathcal{N}$ of all nilpotent groups.
\begin{lemma}\label{lem-Q8-free}
Let $G$ be a soluble group such that $G^{\mathcal{N}}$ is $\Q_8$-free. Then $G^{\mathcal{N}}$ has an abelian Sylow $2$-subgroup.
\end{lemma}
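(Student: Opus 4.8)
The plan is to argue by induction on $|G|$, reducing to a monolithic situation and then producing a section of $G^{\mathcal{N}}$ isomorphic to $Q_8$, which contradicts the hypothesis. Put $L=G^{\mathcal{N}}$, let $P_0$ be a Sylow $2$-subgroup of $L$, and suppose $L$ is $Q_8$-free while $P_0$ is non-abelian, with $|G|$ minimal.

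The first block of steps is a sequence of standard reductions. For any $1\ne M\trianglelefteq G$ one has $(G/M)^{\mathcal{N}}=LM/M\cong L/(L\cap M)$, which is $Q_8$-free as a quotient of $L$; by minimality its Sylow $2$-subgroup $P_0M/M$ is abelian, hence $[P_0,P_0]\le M$. If some minimal normal subgroup $M$ of $G$ fails to lie in $L$, then $L\cap M=1$ and $P_0\cong P_0M/M$ is abelian, a contradiction; thus every minimal normal subgroup of $G$ lies in $L$, and $L\ne 1$. If $G$ had two distinct minimal normal subgroups $M_1,M_2$, then $[P_0,P_0]\le M_1\cap M_2=1$, again a contradiction; so $G$ has a unique minimal normal subgroup $N$, necessarily with $[P_0,P_0]\le N$, and since $[P_0,P_0]$ is a nontrivial $2$-group this forces $N$ to be an elementary abelian $2$-group. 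Hence $\Oo_{2'}(G)=1$, so $\Fit(G)=\Oo_2(G)=:V\ne 1$ and $\C_G(V)\le V$; as $N$ is a nontrivial normal $2$-subgroup, $N\le V$ and $1\ne N\cap\Z(V)\trianglelefteq G$ gives $N\le\Z(V)=\C_G(V)$. So we are reduced to the monolithic case with monolith $N$ an elementary abelian $2$-group sitting inside both $L$ and $\Z(\Oo_2(G))$, with $[P_0,P_0]\le N$.

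The remaining, decisive step is to extract a quaternion section. The input is the standard identity $L=[L,G]$ (from the fact that $G/[L,G]$, being a central extension of the nilpotent group $G/L$, is itself nilpotent, so $G^{\mathcal{N}}\le[L,G]$), together with $G=L D$ for a system normaliser $D$ and $L\cap D\le L'$ (Corollary~\ref{nilres} and Lemma~\ref{lem-complement}); these give $L=L'[L,D]$, so a Hall $2'$-subgroup $C$ arising from $D$ acts on the $2$-part of the abelianisation $L/L'$, equivalently on $P_0/(P_0\cap L')$, with no nontrivial fixed quotient, hence nontrivially whenever $P_0\not\le L'$. Feeding this coprime-type action, together with the facts that $[P_0,P_0]\le N$ is elementary abelian and $N\le\Z(\Oo_2(G))$, into an extension-theoretic analysis then forces $P_0$ to admit a section isomorphic to $Q_8$: the model is $\SL(2,3)=Q_8\rtimes C_3$, where an order-$3$ automorphism of $C_2\times C_2$ cyclically permutes the three extension classes giving dihedral groups (and the three giving $C_4\times C_2$) while fixing the class giving $Q_8$, so that a nonsplit $C_3$-invariant extension of $C_2$ by $C_2\times C_2$ must give $Q_8$. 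I expect this last extension-theoretic step, together with the bookkeeping needed to locate the right section of $P_0$ inside the monolithic $G$ and the separate (similar) treatment of the degenerate case $P_0\le L'$ (where one descends along the derived series of $L$), to be the main obstacle. Since this is precisely \cite[Theorem~4]{SeitzWright1969} for the formation $\mathcal{N}$, the cleanest route is to invoke that theorem directly.
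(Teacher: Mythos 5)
The paper gives no proof of this lemma at all: it simply records that the statement is \cite[Theorem~4]{SeitzWright1969} applied to the formation $\mathcal{N}$ of nilpotent groups, which is exactly where your proposal ends up. Your preliminary reductions (induction, unique minimal normal $2$-subgroup, $[P_0,P_0]\le N\le \Z(\Oo_2(G))$) are sound, but your attempted self-contained derivation stalls at the decisive extension-theoretic step of actually producing the $Q_8$ section --- as you acknowledge --- so the citation carries the entire argument, just as it does in the paper.
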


Lemma~\ref{lem-complement} leads to an interesting characterisation of solubility. As a consequence of the well-known theorem of Kegel and Wielandt if a group $G=A_1A_2\cdots A_k$ is the product of pairwise permutable nilpotent subgroups $A_1,A_2,\cdots,A_k$, then $G$ is soluble (\cite[Corollary~2.4.4]{AFG}). We can say even more.

\begin{definition}
An \emph{$\mathcal{N}$-decomposition} of a group $G$ is a set of nilpotent subgroups $A_1,A_2,\cdots,A_k$, $k\geq 2$, of $G$ satisfiying the following properties:
\begin{enumerate}
\item $G=A_1A_2\cdots A_k$;
\item $A_i$ is normalised by $A_j$ for each $1\leq i <j\leq k$.
\item $(A_1\cdots A_i) \cap (A_{i+1} \cdots A_k) \leq A_1' \cdots A_i'$ for each $1 \leq i \leq k-1$.
\end{enumerate}
\end{definition}

\begin{theorem}\label{Ndec}
Assume that the action of a group  $A$ on a group $G$ is coprime. Then $G$ is soluble if and only if $G$ has an $\mathcal{N}$-decomposition $\{A_1,A_2,\cdots,A_k\}$  composed of $A$-invariant subgroups $A_i$, $1 \leq i \leq k$. If $G^{\mathcal{N}}$ is $\Q_8$-free, then $A_i'$ of odd order, for every $1 \leq i \leq k-1$.
\end{theorem}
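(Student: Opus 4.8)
\emph{The easy direction.} Suppose $\{A_1,\dots,A_k\}$ is an $\mathcal N$-decomposition of $G$. Property~(2) gives $A_iA_j=A_jA_i$ for all $i,j$, so the $A_i$ are pairwise permutable nilpotent subgroups with $G=A_1\cdots A_k$; the Kegel--Wielandt theorem in the form of \cite[Corollary~2.4.4]{AFG} then yields that $G$ is soluble. The coprime action plays no role here.

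\emph{The hard direction.} The plan is to argue by induction on $|G|$, having first fixed, by Lemma~\ref{lem-coprime-action}, an $A$-invariant complement basis $\mathcal K$ of $G$. I would in fact prove the following strengthened statement: $G$ has an $\mathcal N$-decomposition $\{A_1,\dots,A_k\}$ of $A$-invariant subgroups with the two extra properties
\begin{itemize}
\item[(i)] $A_1A_2\cdots A_{k-1}=G^{\mathcal N}$, and
\item[(ii)] each $A_i$ is normalised by $\mathbf N_G(\mathcal K)$.
\end{itemize}
If $G$ is nilpotent I would take $k=2$, $A_1=1$, $A_2=G$, for which all assertions are immediate. If $G$ is not nilpotent then $G^{\mathcal N}\neq 1$, and applying Lemma~\ref{lem-complement} with $F=G$ (its proof goes through with any $A$-invariant complement basis, in particular $\mathcal K$) gives that $H:=\mathbf N_G(\mathcal K)$ is $A$-invariant, $G=G^{\mathcal N}H$ and $G^{\mathcal N}\cap H\le (G^{\mathcal N})'$. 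Since $(G^{\mathcal N})'\subsetneq G^{\mathcal N}$ this forces $H\neq G$, and $G^{\mathcal N}\neq G$ together with $G=G^{\mathcal N}H$ forces $H\neq 1$; hence $G^{\mathcal N}$ and $H$ are proper in $G$, $A$ acts coprimely on each, and (as $H$ normalises every member of $\mathcal K$ while $G^{\mathcal N}\trianglelefteq G$) the system $\mathcal K$ restricts to $A$-invariant complement bases of $G^{\mathcal N}$ and of $H$. By induction I obtain $\mathcal N$-decompositions $\{B_1,\dots,B_l\}$ of $G^{\mathcal N}$ and $\{C_1,\dots,C_m\}$ of $H$ satisfying (i) and (ii), and I would propose $\{B_1,\dots,B_l,C_1,\dots,C_m\}$ as the decomposition of $G$.

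\emph{Verifying the axioms for the concatenation — the main obstacle.} Property~(1) is clear. For property~(ii), hence the ``mixed'' cases of property~(2): conjugation by an element of $H=\mathbf N_G(\mathcal K)$ restricts to an automorphism of $G^{\mathcal N}$ stabilising each member of the restricted complement basis $\mathcal K\cap G^{\mathcal N}$ and each characteristic subgroup of $G^{\mathcal N}$; since the recursive construction of the $B_i$ takes place entirely inside $G^{\mathcal N}$ using only its nilpotent residuals and the restriction of $\mathcal K$, it follows that $H$ normalises each $B_i$, whence each $B_i$ is normalised by every $C_j\le H$, and together with the decomposition axioms for $\{B_i\}$ and for $\{C_j\}$ this yields~(2). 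Property~(i) follows since $B_1\cdots B_l=G^{\mathcal N}$ while $C_1\cdots C_{m-1}=H^{\mathcal N}\le H\cap G^{\mathcal N}\le G^{\mathcal N}$ (using (i) for $H$ and that $H/(H\cap G^{\mathcal N})\cong G/G^{\mathcal N}$ is nilpotent), so $A_1\cdots A_{k-1}=G^{\mathcal N}\,C_1\cdots C_{m-1}=G^{\mathcal N}$. The delicate part is property~(3): for an index $i$ inside the $G^{\mathcal N}$-block one has to estimate $(B_1\cdots B_i)\cap\big((B_{i+1}\cdots B_l)H\big)$, and for an index inside the $H$-block the analogous intersection, in both cases via the modular law, the bound $G^{\mathcal N}\cap H\le (G^{\mathcal N})'$ of Lemma~\ref{lem-complement}, and property~(3) of the relevant sub-decomposition. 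I expect this bookkeeping, rather than any single conceptual step, to be the real difficulty: it is exactly where one needs precise control of how $G^{\mathcal N}$, and the terms of the lower nilpotent series in general, sit relative to the system normaliser. A device that should make it run is to take the base-case decomposition of a nilpotent group $N$ to be $\{1,N\}$: then the product of the derived subgroups of the pieces is $N'$ itself, so Lemma~\ref{lem-complement}'s inclusion feeds directly into~(3) at the bottom of the recursion and the boundary estimates propagate upward.

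\emph{The $\Q_8$-free addendum.} Assume $G^{\mathcal N}$ is $\Q_8$-free. By~(i), $A_i\le A_1\cdots A_{k-1}=G^{\mathcal N}$ for every $i\le k-1$, so each such $A_i$ is a nilpotent subgroup of $G^{\mathcal N}$. By Lemma~\ref{lem-Q8-free}, $G^{\mathcal N}$ has an abelian Sylow $2$-subgroup, and hence so does $A_i$; writing $A_i=P\times R$ with $P$ its (abelian) Sylow $2$-subgroup and $R$ its Hall $2'$-subgroup gives $A_i'=R'$, which has odd order. Thus $A_i'$ has odd order for all $1\le i\le k-1$, as required.
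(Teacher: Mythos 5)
Your easy direction is correct, and your treatment of the $\Q_8$-free addendum is essentially the paper's (each $A_i$ with $i\le k-1$ lies in $G^{\mathcal N}$, which has abelian Sylow $2$-subgroups by Lemma~\ref{lem-Q8-free}, so $A_i'$ has odd order). But the hard direction has a genuine gap, and you have located it yourself without closing it: property~(3) of the $\mathcal N$-decomposition is never verified, and the split you choose prevents it from following from what you do establish. Note first that $H=\N_G(\mathcal K)$ is a system normaliser of $G$ itself and is therefore nilpotent by a classical theorem of Hall, so your recursion into $H$ is vacuous and your proposed decomposition is really $\{B_1,\dots,B_l,H\}$ with $\{B_i\}$ a decomposition of $G^{\mathcal N}$. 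At the junction $i=l$, property~(3) demands $(B_1\cdots B_l)\cap H=G^{\mathcal N}\cap H\le B_1'\cdots B_l'$, whereas Lemma~\ref{lem-complement} applied with $F=G$ only gives $G^{\mathcal N}\cap H\le (G^{\mathcal N})'=(B_1\cdots B_l)'$, which contains all the cross-commutators $[B_i,B_j]$ and is in general strictly larger than $B_1'\cdots B_l'$. The same defect propagates to the indices $i<l$: the Dedekind-law computation you sketch yields $(B_1\cdots B_i)\cap (B_{i+1}\cdots B_l)(G^{\mathcal N}\cap H)$, and the extra factor $G^{\mathcal N}\cap H\le(G^{\mathcal N})'$ cannot be absorbed into $B_1'\cdots B_i'$ by the induction hypothesis for $G^{\mathcal N}$.

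The paper avoids this by never splitting off the whole of $G^{\mathcal N}$ at once. It peels off a single \emph{nilpotent} characteristic factor $A_1=\Fit_2(G)^{\mathcal N}\le\Fit(G)$ and applies Lemma~\ref{lem-complement} to $F=\Fit_2(G)$ rather than to $F=G$; the relative system normaliser $H_1$ of $\Fit_2(G)$ then satisfies $G=A_1H_1$ with $A_1\cap H_1\le A_1'$ --- the derived subgroup of one nilpotent factor, which is exactly what axiom~(3) requires --- and the construction iterates inside $H_1$, a proper but generally non-nilpotent subgroup. Property~(3) for the whole chain is then proved by passing to the quotients $H_{i-1}/B_{i-1}$ and invoking Lemma~\ref{lem-system-normaliser}. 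To rescue your scheme you would have to replace $F=G$ by $F=\Fit_2(G)$ (or otherwise arrange for the first block to be a single nilpotent group with the intersection controlled by its own derived subgroup), at which point you are reproducing the paper's argument.
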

\begin{proof}
If $G$ has an $\mathcal{N}$-decomposition, then $G$ is soluble by the above observation.

Assume now that the action of $A$ on the soluble group $G$ is coprime. We prove that $G$ has an $\mathcal{N}$-decomposition $\{A_1,A_2,\cdots,A_k\}$ composed of $A$-invariant subgroups (with $A_i'$ of odd order for each $1\leq i \leq k-1$ if $G^{\mathcal{N}}$ is $\Q_8$-free).

If $G$ is nilpotent, then $\{1, G \}$ is an $\mathcal{N}$-decomposition of $G$ composed of $A$-invariant subgroups. Hence we may suppose that
$G$ is not nilpotent. Then, $\F(G) \neq G$. Let $\Fit_2(G)$ be the normal subgroup of $G$ such that $\Fit_2(G)/\Fit(G) = \Fit(G/\Fit(G))$. By \cite[Theorem~A.10.6]{DoerkHawkes1992}, $\Fit_2(G) \neq \Fit(G)$. In particular,  $A_1=\Fit_2(G)^{\mathcal{N}} \neq 1$ and $A_1 \leq \Fit(G)$. Since $A_1$ is characteristic in $G$, it follows that $A_1$ is normal in $G$ and $A$-invariant.

Now  assume that $G^{\mathcal{N}}$ is $\Q_8$-free. Since $A_1 \leq G^{\mathcal{N}}$, it follows that $A_1$ is $\Q_8$-free. Applying Lemma~\ref{lem-Q8-free}, $A_1$ has an abelian Sylow $2$-subgroup. As $A_1$ is nilpotent, $A_{1}'$ is the direct product of the commutator subgroups of its all Sylow subgroups. Hence  $A_{1}'$ is of odd order.

By Lemma~\ref{lem-complement}, there exists an $A$-invariant system normaliser $H_1$ of $\Fit_2(G)$ relative to $G$ such that $G=A_1H_1$ and $A_1 \cap H_1 \leq A_1'$. Then $H_1$ is an $A$-invariant soluble proper subgroup of $G$ by Lemma~\ref{lem-complement}. If $H_1$ is nilpotent, then $\{A_1, A_2 = H_1\}$ is the desired $\mathcal{N}$-decomposition of $G$. Assume that $H_1$ is not nilpotent.

Write $H_0=G$. Assume that $i >1$ and $G$ has an $A$-invariant nilpotent subgroups $A_j$, $1 \leq j \leq i-1$, and subgroups $H_j$,  $0 \leq j \leq i-1$ of $G$ such that

\begin{enumerate}
\item $H_j = A_{j+1} \cdots A_{i-1}$;
\item $A_j=\Fit_2(H_{j-1})^{\mathcal{N}} \leq \Fit(H_{j-1})$;
\item $H_j$ is an $A$-invariant system normaliser of $\Fit_2(H_{j-1})$ relative to $H_{j-1}$;
\item $H_{j-1}=A_jH_j$  and $A_j \cap H_j \leq A_j'$
\end{enumerate}

Assume that $H_{i-1}$ is not nilpotent. Then $A_i = \Fit_2(H_{i-1})^{\mathcal{N}} \neq 1$,  is an A-invariant nilpotent normal subgroup of $H_{i-1}$. Let $H_j$ be a system normaliser of
$\Fit_2(H_{i-1})$ relative to $H_{i-1}$. Then $H_j$ is $A$-invariant, $H_{i-1}=A_iH_i$ and $A_i \cap H_i \leq A_i'$ by Lemma~\ref{lem-complement}.

Since $G$ is finite, there exists $k \geq 2$ such that $A_k = H_{k-1}$ is nilpotent. Then $G=A_1A_2\cdots A_k$ and $A_i$ is normalised by $A_j$ for each $1\leq i <j\leq k$. We prove that $(A_1\cdots A_i) \cap (A_{i+1} \cdots A_k) \leq A_1' \cdots A_i'$ for each $1 \leq i \leq k-1$ by induction on $i$. It is clear that we may assume that $i >1$ and $B_{i-1} = (A_1\cdots A_{i-1}) \cap H_{i-1}= ( A_1' \cdots A_{i-1}')  \cap H_{i-1}$.

We have that  $B_{i-1} \unlhd H_{i-1}$. Denote with bars the images in the group $\overline{H_{i-1}}=H_{i-1}/B_{i-1}$ Write $F = \Fit_2(H_{i-1})$. Since $H_i$ is a system normaliser of $F$ relative to $H_{i-1}$, it follows that $\overline{H_{i}}$ is a system normaliser of $\overline{F}$ relative to $\overline{H_{i-1}}$ by  Lemma~\ref{lem-system-normaliser}. Hence $\overline{A_{i}} \cap \overline{H_{i}} \leq \overline{A_{i}}'$ by Lemma~\ref{lem-complement}. Thus $B_{i-1}A_i \cap H_i=B_{i-1}A_i' \cap H_i$

$$
\begin{aligned}
A_1A_2\cdots A_i \cap H_i&= A_1A_2\cdots A_i \cap H_{i-1}\cap H_i\\
&=(A_1A_2 \cdots A_{i-1} \cap H_{i-1})A_i \cap H_i\\
&=B_{i-1}A_i \cap H_i=B_{i-1}A_i' \cap H_i\\
&=(A_1'A_2'\cdots A_{i-1}' \cap H_{i-1})A_i' \cap H_i\\
&=A_1'A_2'\cdots A_{i-1}'A_i' \cap H_{i-1} \cap H_i\\
&=A_1'A_2'\cdots A_{i-1}'A_i' \cap H_i.
\end{aligned}
$$
Then $\{A_1,\cdots,A_k\}$ is an $\mathcal{N}$-decomposition of $G$.

Assume that $G^{\mathcal{N}}$ is $\Q_8$-free. Let $i \in \{1, \cdots, k-1\}$. Since $A_i \leq G^{\mathcal{N}}$, it follows that $A_i$ is $\Q_8$-free. Applying Lemma~\ref{lem-Q8-free}, $A_i$ has an abelian Sylow $2$-subgroup. Since $A_i$ is nilpotent, $A_{i}'$ is the direct product of the commutator subgroups of its Sylow subgroups. Hence $A_{i}'$ is of odd order.
\end{proof}

We shall make repeated use of the following two elementary lemmas.

\begin{lemma}\label{lem-class-two-property}
Let $G$ be a group with all Sylow subgroups of $G$ of nilpotency class at most two. Suppose that $N$ is a nilpotent subgroup of $G$. Then
\begin{itemize}
\item[\emph{(1)}] $N$ has nilpotency class at most two;
\item[\emph{(2)}] If $N$ is normalised by a subgroup $H$ of $G$, then $N' \cap H \leq\Z(\Fit(H))$.
\end{itemize}
\end{lemma}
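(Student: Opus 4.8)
The plan is to push everything down to the Sylow subgroups of $N$ and then invoke the class-two hypothesis on the Sylow subgroups of $G$. For part~(1), I would recall that the nilpotent group $N$ is the internal direct product $N=\prod_{p\in\pi(N)}N_p$ of its Sylow subgroups; each $N_p$ is a $p$-subgroup of $G$, hence lies inside a Sylow $p$-subgroup of $G$, which has nilpotency class at most two by hypothesis. Since nilpotency class cannot increase on passing to a subgroup, $N_p$ has class at most two, and then $\gamma_3(N)=\prod_p\gamma_3(N_p)=1$ shows that $N$ has class at most two.

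For part~(2), the preliminary observations are: $N_p$ is characteristic in $N$, so $(N_p)'=[N_p,N_p]$ is characteristic in $N$ and hence $H$ normalises each $(N_p)'$; and, because the $N_p$ commute pairwise, $N'=\prod_p (N_p)'$, with $(N_p)'$ the Sylow $p$-subgroup of the nilpotent group $N'$. Using the primary decomposition of an element of $N'\cap H$ (each $p$-component of such an element is a power of it, hence again lies in the subgroup $H$, and lies in $(N_p)'$), it suffices to prove $(N_p)'\cap H\le\Z(\Fit(H))$ for every prime $p$; and since $(N_p)'\cap H$ consists of $p$-elements, this is equivalent to $(N_p)'\cap H\le\Z(O_p(H))$. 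So I would establish two containments: $(N_p)'\cap H\le O_p(H)$ and $(N_p)'\cap H\le\C_G(O_p(H))$.

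The first containment is formal: $M:=(N_p)'\cap H$ is a $p$-group normalised by $H$ (as $H$ normalises both $(N_p)'$ and $H$), hence $M\trianglelefteq H$, hence $M\le O_p(H)$. The second is the one genuine use of the hypothesis. Since $O_p(H)\le H$ normalises $N_p$, the product $N_pO_p(H)$ is a subgroup generated by two $p$-groups, one of which is normal in it, hence a $p$-subgroup of $G$; embed it in some $S\in\Syl_p(G)$. Then $S$ has class at most two, so $S'\le\Z(S)$, and since $N_p\le S$ we get $(N_p)'\le S'\le\Z(S)$; in particular every element of $(N_p)'$ centralises $O_p(H)\le S$. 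Combining the two containments gives $(N_p)'\cap H\le\Z(O_p(H))$, and therefore $N'\cap H\le\prod_p\Z(O_p(H))=\Z(\Fit(H))$, as required.

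The step I expect to be the crux — the only place where more than bookkeeping with characteristic subgroups and primary decompositions is needed — is the observation that although $N_p$ and $O_p(H)$ a priori sit in different Sylow $p$-subgroups of $G$, the fact that $O_p(H)$ normalises $N_p$ forces $N_pO_p(H)$ to be a single $p$-group; this is precisely what lets the class-two condition on the Sylow subgroups of $G$ put $(N_p)'$ into the centre of a Sylow $p$-subgroup containing $O_p(H)$, and so centralise $O_p(H)$.
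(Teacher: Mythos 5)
Your proposal is correct and follows essentially the same route as the paper: decompose $N$ into its Sylow subgroups for part (1), and for part (2) reduce to each prime $p$, observe that $(N_p)'\cap H$ lands in $O_p(H)$, and use the fact that $N_pO_p(H)$ is a $p$-subgroup of $G$ (hence of class at most two) to conclude $(N_p)'$ centralises $O_p(H)$. The only cosmetic difference is that you embed $N_pO_p(H)$ in an ambient Sylow $p$-subgroup before invoking the class-two hypothesis, whereas the paper applies it to $N_pO_p(H)$ directly; the content is identical.
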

\begin{proof}
As $N$ is nilpotent, $N=P_1 \times \cdots \times P_s$, where  $\pi(N)=\{p_1,\cdots,p_s\}$ and $P_i$ be a Sylow $p_i$-subgroup of $N$, $1 \leq i \leq s$. Since $P_i$ has nilpotent class at most $2$ for all $i$, we have that $N$ has nilpotency class at most two by \cite[Theorem~A.8.2]{DoerkHawkes1992} and Statement~$(1)$ holds.

Assume that $N$ is normalised by $H$. Then $N \cap H$ is a normal nilpotent subgroup of $H$ and so $N \cap H$ is contained in $\Fit(H)$, the Fitting subgroup of $H$. Write $N' \cap H=Q_1 \times \cdots \times Q_s$, where $Q_i$ is a Sylow $p_i$-subgroup of $N' \cap H$, $1 \leq i \leq s$.

Fix an index $i \in \{1, \cdots, s\}$. Since $N'=P_1' \times \cdots \times P_s'$, it follows that $Q_i \leq P_i'$ as $P_i'$ is the unique Sylow $p_i$-subgroup of $N'$.  Let $R_i$ be the Sylow $p_i$-subgroup of $\Fit(H)$. Then $Q_i \leq R_i$. Since $H$ normalises $N$, we have that $R_i$ normalises $P_i$. Hence $R_iP_i$ is a $p_i$-subgroup of $G$. Since $R_iP_i$ has nilpotency class at most two, it follows that $[Q_i,R_i]\leq [P_i', R_i]=1,$. Thus $Q_i \leq \Z(R_i) \leq \Z(\Fit(H))$. Consequently, $N' \cap H\leq \Z(\Fit(H))$ and Statement~(2) holds.
\end{proof}


\begin{lemma}\label{lem-Sylow}
Let a group $G=A_1A_2\cdots A_k$ be the product of some nilpotent subgroups $A_1,A_2,\cdots,A_k$ of $G$ such that $A_i$ is normalised by $A_j$ for $1 \leq i<j \leq k$. Let $p$ be a prime and let $P_i$ be the Sylow $p$-subgroup of $A_i$ for each $i$. Then $P_1P_2\cdots P_k$ is a Sylow $p$-subgroup of $G$.
\end{lemma}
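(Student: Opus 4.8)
The plan is to argue by induction on $k$, the case $k=1$ being immediate since then $G=A_1$ is nilpotent and $P_1$ is its unique Sylow $p$-subgroup. For $k\ge 2$, set $H:=A_2A_3\cdots A_k$; this is a subgroup of $G$ because, for each $i$ with $2\le i\le k$, $A_i$ is normalised by $A_{i+1},\dots,A_k$ and hence by their product. The factorisation $H=A_2\cdots A_k$ meets the hypotheses of the lemma with $k-1$ factors (the normalising conditions for the pairs $2\le i<j\le k$ are inherited from those for $G$), so by the inductive hypothesis $Q:=P_2P_3\cdots P_k$ is a Sylow $p$-subgroup of $H$. Moreover, since $A_1$ is nilpotent, $P_1$ is its unique Sylow $p$-subgroup and is therefore characteristic in $A_1$; and since $G=A_1H$ with $H$ normalising $A_1$, we get $A_1\trianglelefteq G$ and hence $P_1\trianglelefteq G$.

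Next I would show that $P:=P_1Q=P_1P_2\cdots P_k$ is a Sylow $p$-subgroup of $G$. First, $P$ is a subgroup of $G$ because $P_1\trianglelefteq G$, and it is a $p$-group: $P_1\trianglelefteq P$ with $P_1$ a $p$-group and $P/P_1\cong Q/(Q\cap P_1)$ a $p$-group. It then suffices to check that $|P|$ equals the $p$-part $|G|_p$ of $|G|$. From $G=A_1H$ and $|Q|=|H|_p$ one computes
\[
|G|_p=\frac{|A_1|_p\,|H|_p}{|A_1\cap H|_p}=\frac{|P_1|\,|Q|}{|A_1\cap H|_p},
\qquad
|P|=|P_1Q|=\frac{|P_1|\,|Q|}{|P_1\cap Q|},
\]
so the claim reduces to the identity $|P_1\cap Q|=|A_1\cap H|_p$.

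For this last identity, the key observation is that $P_1\cap H$ is a normal $p$-subgroup of $H$ — normal because $P_1\trianglelefteq G$ — hence it lies in $\Oo_p(H)$ and so in every Sylow $p$-subgroup of $H$, in particular in $Q$; thus $P_1\cap Q=P_1\cap H$. On the other hand, $A_1$ is nilpotent, so the Sylow $p$-subgroup of the subgroup $A_1\cap H$ of $A_1$ is precisely $(A_1\cap H)\cap P_1=P_1\cap H$, giving $|A_1\cap H|_p=|P_1\cap H|=|P_1\cap Q|$. Combining the displayed equalities yields $|P|=|G|_p$, so $P$ is a Sylow $p$-subgroup of $G$, completing the induction. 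The only place that needs a little care is precisely this bookkeeping with the intersections — concretely, the point that the normal $p$-subgroup $P_1\cap H$ of $H$ is forced to sit inside the chosen Sylow $p$-subgroup $Q$, which is what guarantees that no loss of $p$-part occurs when the product $P_1P_2\cdots P_k$ is formed.
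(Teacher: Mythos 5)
Your proof is correct and follows essentially the same route as the paper's: induction on $k$ combined with an order count comparing $|P_1P_2\cdots P_k|$ with $|G|_p$. The only cosmetic differences are that you peel off the first factor $A_1$ (exploiting $P_1\trianglelefteq G$ and the observation $P_1\cap H\leq \Oo_p(H)\leq Q$ to get the exact identity $|P_1\cap Q|=|A_1\cap H|_p$), whereas the paper peels off the last factor $A_k$ and settles the count by a two-way divisibility argument; both work equally well.
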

\begin{proof}
We argue by induction on $k$. We can assume that $k > 1$  and the result is true for $k-1$. Note that $A=A_1A_2\cdots A_{k-1}$ is a subgroup of $G$ satisfying the hypotheses of the lemma. Hence $P=P_1P_2\cdots P_{k-1}$ is a Sylow $p$-subgroup of $A=A_1A_2\cdots A_{k-1}$. Since $A_i$ is nilpotent, $P_i$ is a characteristic subgroup of $A_i$. By hypothesis, $A_k$ normalises $A_i$ for $1 \leq i \leq k-1$. Hence $P_k \leq A_k$ normalises $P_i$ for $1 \leq i \leq k-1$. Consequently $P_k$ normalises $P=P_1P_2\cdots P_{k-1}$ and $PP_k$ is a $p$-subgroup of $G$. Let $|G|_p$be denote the largest power of $p$ dividing the order of $G$. Since $P \cap P_k$ is a $p$-subgroup of $A \cap A_k$, it follows that
$$|G|_p=\frac{|A|_p\cdot |A_k|_p}{|A \cap A_k|_p}~\text{divides}~\frac{|P|\cdot |P_k|}{|P \cap P_k|}=|PP_k|.$$
Hence $PP_k=P_1P_2\cdots P_k$ is a Sylow $p$-subgroup of $G$, as desired.
\end{proof}

The following lemmas turn out to be crucial to prove Theorem~\ref{thm-Q-8}.

\begin{lemma}\label{lem-nilpotency-class-2}
Assume that the action of a group  $A$ on a soluble group $G$ is coprime and all Sylow subgroups of $G$ have nilpotency class at most two. Let $\{A_1,A_2,\cdots,A_k\}$ be an $\mathcal{N}$-decomposition of $G$ composed of $A$-invariant subgroups. Then
$$A_1A_2\cdots A_i \cap A_{i+1}\cdots A_k \leq \C_G(A_iA_{i+1}) \text { for each } 1 \leq i \leq k-1.$$
\end{lemma}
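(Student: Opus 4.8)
The plan is to split the required inclusion into two halves and prove them simultaneously by induction on $i$. Throughout, write $N_j=A_1\cdots A_j$ and $H_j=A_{j+1}\cdots A_k$; these are subgroups (higher‑indexed factors normalise lower‑indexed ones), $N_j\trianglelefteq G$, $G=N_jH_j$, each $A_l'\cdots A_j'$ is a subgroup normalised by $H_m$ for $m\ge j$, and $A_j\trianglelefteq H_{j-1}$ (so $A_j\le\Fit(H_{j-1})$). By Lemma~\ref{lem-class-two-property}(1) every $A_j$ has nilpotency class at most two, hence $A_j'\le\Z(A_j)$, and property~(3) of an $\mathcal{N}$‑decomposition gives $N_i\cap H_i\le A_1'\cdots A_i'$. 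Since $A_iA_{i+1}=\langle A_i,A_{i+1}\rangle$, we have $\C_G(A_iA_{i+1})=\C_G(A_i)\cap\C_G(A_{i+1})$, so it is enough to prove, for each $i$, that (a)~$N_i\cap H_i\le\C_G(A_i)$ and (b)~$N_i\cap H_i\le\C_G(A_{i+1})$. For the base case $i=1$, property~(3) gives $A_1\cap H_1\le A_1'\cap H_1$; Lemma~\ref{lem-class-two-property}(2), applied to the nilpotent subgroup $A_1$ normalised by $H_1$, yields $A_1'\cap H_1\le\Z(\Fit(H_1))$, and since $A_2$ is a nilpotent normal subgroup of $H_1$ we get $A_2\le\Fit(H_1)$, which is (b); while $A_1\cap H_1\le A_1'\le\Z(A_1)$ gives (a).

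For the inductive step ($i\ge2$), part (a) goes smoothly: given $x\in N_i\cap H_i$, property~(3) lets us write $x=uy$ with $u\in A_1'\cdots A_{i-1}'\le N_{i-1}$ and $y\in A_i'$; as $x\in H_i\subseteq H_{i-1}$ and $y\in A_i'\subseteq H_{i-1}$ we get $u=xy^{-1}\in N_{i-1}\cap H_{i-1}$, so the case $i-1$ gives $u\in\C_G(A_{i-1}A_i)\subseteq\C_G(A_i)$, and $y\in A_i'\le\Z(A_i)$, whence $x\in\C_G(A_i)$. (This uses part (b) of the case $i-1$.)

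Part (b) is the crux. Here I would prove the stronger statement $N_i\cap H_i\le\Z(\Fit(H_i))$; this implies (b) because $A_{i+1}$ is a nilpotent normal subgroup of $H_i$, so $A_{i+1}\le\Fit(H_i)$. Applying Lemma~\ref{lem-class-two-property}(2) to each $A_j$ ($j\le i$), normalised by $H_i$, gives $A_j'\cap H_i\le\Z(\Fit(H_i))$; as $\Z(\Fit(H_i))$ is abelian, it then suffices to establish the refined factorisation $N_i\cap H_i\subseteq(A_1'\cap H_i)(A_2'\cap H_i)\cdots(A_i'\cap H_i)$. This inclusion is false for an arbitrary subgroup (even one normalising every $A_j'$) in place of $H_i$, so its proof must genuinely exploit that $H_i$ arises from $G=N_iH_i$ with $N_i\trianglelefteq G$ and $N_i\cap H_i\le A_1'\cdots A_i'\le N_i$. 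The plan is to combine Dedekind's modular law along the $H_i$‑invariant chain $A_1'\trianglelefteq A_1'A_2'\trianglelefteq\cdots\trianglelefteq A_1'\cdots A_i'$ with Lemma~\ref{lem-Sylow} applied prime by prime to the factorised subgroups $A_1'\cdots A_i'$ and $A_{i+1}\cdots A_k$, and with the hypothesis that the Sylow subgroups of $G$ have class at most two — which forces the $q$‑primary component of each $x\in N_i\cap H_i$ into the centre of a Sylow $q$‑subgroup of $G$ for every prime $q$ — so as to control the relevant commutators. A useful auxiliary device is induction on $|G|$ through a minimal normal subgroup $M\le A_1$ of $G$: then $\{MA_j/M\}$ is an $\mathcal{N}$‑decomposition of $G/M$ with class‑$\le2$ Sylow subgroups, the inductive hypothesis gives $[x,A_{i+1}]\le M\cap A_{i+1}\le A_1\cap A_{i+1}\le A_1'$ (using property~(3)), and Lemma~\ref{lem-class-two-property}(2) with $N=A_1$, $H=A_{i+1}$ places this commutator in $\Z(A_{i+1})$, after which the same Sylow/class‑two analysis should make it trivial.

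The step I expect to be the main obstacle is precisely (b) — concretely, the refined factorisation $N_i\cap H_i\subseteq\prod_{j\le i}(A_j'\cap H_i)$, equivalently $N_i\cap H_i\le\Z(\Fit(H_i))$. It cannot be proved by an induction confined to $A_1'\cdots A_i'$, and making the interaction between the global product decomposition $G=N_iH_i$ and the Sylow‑wise nilpotency‑class‑two information precise is where the real work lies.
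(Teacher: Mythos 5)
Your base case and the $\C_G(A_i)$ half of the induction are correct and match the paper's argument: writing $B_i=A_1\cdots A_i\cap H_i$, property (3) of the $\mathcal{N}$-decomposition together with Dedekind's law gives $B_i=B_{i-1}A_i'\cap H_i$, and then $B_{i-1}\le\C_G(A_i)$ (inductive hypothesis) and $A_i'\le\Z(A_i)$ yield $B_i\le\C_G(A_i)$. You have also correctly identified the right strengthening for the other half, namely $B_i\le\Z(\Fit(H_i))$, which gives $B_i\le\C_G(A_{i+1})$ since $A_{i+1}\le\Fit(H_i)$.

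However, you explicitly leave that strengthening unproved — you call it ``the main obstacle'' and offer only a plan — so the proposal has a genuine gap, and moreover the route you sketch (the refined factorisation $N_i\cap H_i\subseteq(A_1'\cap H_i)\cdots(A_i'\cap H_i)$, or a minimal-normal-subgroup reduction that only lands $[x,A_{i+1}]$ inside $\Z(A_{i+1})$ rather than killing it) does not close it. The missing idea in the paper is a second, prime-by-prime invariant carried through the same induction: for each prime $p$, the Sylow $p$-subgroup $B_{i,p}$ of the nilpotent normal subgroup $B_i$ of $H_i$ satisfies $B_{i,p}\le G_p'$, where $G_p=A_{1,p}\cdots A_{k,p}$ is the Sylow $p$-subgroup of $G$ produced by Lemma~\ref{lem-Sylow}. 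This is preserved inductively because $B_i\le B_{i-1}A_i'$ and, by Lemma~\ref{lem-Sylow} again, the Sylow $p$-subgroup of the nilpotent group $B_{i-1}A_i'$ is $B_{i-1,p}A_{i,p}'$, with $B_{i-1,p}\le G_p'$ by induction and $A_{i,p}'\le G_p'$. Once you know $B_{i,p}\le G_p'$, the class-two hypothesis on $G_p$ gives $[B_{i,p},\Oo_p(H_i)]\le[G_p',G_p]=1$, whence $B_{i,p}\le\Z(\Oo_p(H_i))$ for every $p$ and therefore $B_i\le\Z(\Fit(H_i))$. No factorisation of $B_i$ into the pieces $A_j'\cap H_i$ is needed, and no induction on $|G|$; the whole point is that the derived subgroup of a class-two $p$-group is central, applied to the \emph{global} Sylow subgroup $G_p$ rather than to the individual $A_j$.
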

\begin{proof}

Write $H_i=A_{i+1}A_{i+2} \cdots A_k$ for $1 \leq i \leq k-1$. For each prime $p$, let $A_{i,p}$  be the Sylow $p$-subgroup of $A_i$, $1 \leq i \leq k$. By Lemma~\ref{lem-Sylow},
$G_p = A_{1,p}A_{2,p} \cdots A_{k,p}$ is a Sylow $p$-subgroup of $G$, and $A_{j,p}' \leq G_p'$ for each $j \in \{1, \cdots, k\}$.

Let $1 \leq i \leq k-1$ and write $B_i=A_1A_2\cdots A_i \cap H_i$. Let $B_{i,p}$ be a Sylow $p$-subgroup of $B_i$, Clearly $B_i \unlhd H_i$ as $H_i$ normalises $A_1A_2\cdots A_i$.

We prove the following statements by induction on $i$:
\begin{itemize}
\item[($\ast$-1)] $B_i=A_1'A_2' \cdots A_i' \cap H_i \leq \Z(\Fit(H_i)) \cap \C_G(A_{i})$;
\item[($\ast$-2)] $B_{i,p} \leq G_p'$ for each prime $p$.
\end{itemize}
If $i=1$, then
$B_1=A_1 \cap H_1 =A_1' \cap H_1$. Since $A_1$ is nilpotent and $A_{1,p}$ is the Sylow $p$-subgroup of $A_1$, we have that $A_{1,p}'$ is the Sylow $p$-subgroup of $A_1'$. Hence $B_{1,p} \leq A_{1,p}' \leq G_p'$ and ($\ast$-2) follows. Moreover, as $A_1$ is a nilpotent normal subgroup of $G$, it follows from Lemma~\ref{lem-class-two-property} that
$A_1' \cap H_1 \leq \Z(\Fit(H_1))$. Moreover, $A_1' \leq \Z(A_1)$ as $A_1$ is of nilpotency class $2$. Hence ($\ast$-1) and ($\ast$-2) holds for $i=1$.

Now we assume that $i > 1$ and ($\ast$-1) and ($\ast$-2) holds for $i-1$.
Then  $B_{i-1, p} \leq G_p'$ for each prime $p$ and
$B_{i-1}=A_1'A_2' \cdots A_{i-1}' \cap H_{i-1} \leq \Z(\Fit(H_{i-1})) \cap \C_G(A_{i-1}).$


Since $B_{i-1}$ centralizes $A_i \leq \Fit(H_{i-1})$ and $A_i' \leq \Z(A_i)$ by hypothesis,
$B_{i-1}A_i'$ centralizes $A_i$ and so $B_i=B_{i-1}A_i' \cap H_i \leq \C_G(A_i)$.
Moreover, $B_i \leq B_{i-1}A_i'$ is nilpotent.
It follows from Lemma~\ref{lem-Sylow} that
$B_{i-1,p}A_{i,p}'$ is the unique Sylow $p$-subgroup of $B_{i-1}A_i'$ and so $B_{i,p} \leq B_{i-1,p}A_{i,p}'$. By induction, $B_{i-1,p} \leq G_p'$. Hence $B_{i,p} \leq B_{i-1,p}A_{i,p}' \leq G_p'$ and ($\ast$-2) holds for the case $i$.

Since $B_{i} \unlhd H_i$ and $B_i$ is nilpotent, we get $B_i \leq \Fit(H_i)$, moreover, $B_{i,p} \leq \Oo_p(H_i)$.
As $H_i=A_{i+1} \cdots A_{k}$, by Lemma~\ref{lem-Sylow}, $A_{i+1,p}A_{i+2,p} \cdots A_{k,p}$ is a Sylow $p$-subgroup of $H_i$. Clearly $\Oo_p(H_i) \leq A_{i+1,p}A_{i+2,p} \cdots A_{k,p} \leq G_p$.
Since $G_p$ is nilpotent of class two by hypothesis, $[B_{i,p}, \Oo_p(H_i)] \leq [G_p',G_p]=1$. Hence $B_{i,p} \leq \Z(\Oo_p(H_i))$, which implies that $B_i \leq \Z(\Fit(H_i))$ and ($\ast$-1) holds.

Since $A_{i+1} \leq \Fit(H_i)$, we get that
$A_1A_2\cdots A_i \cap A_{i+1}\cdots A_k \leq \C_G(A_iA_{i+1})$. The proof of the lemma is now complete.
\end{proof}



The following known result is a consequence of Theorem~\ref{thm-main}.

\begin{lemma}[{\cite[Lemma~16]{CedoJespersRio2010}}]\label{nilp}
Every nilpotent group of nilpotency class at most two is an IYB-group.
\end{lemma}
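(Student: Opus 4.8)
The plan is to deduce this from Theorem~\ref{thm-main} applied with $\mathfrak{X}=\mathcal{A}$, the class of all abelian groups (which is closed under quotients and direct products), and with the trivial group $A=1$ acting trivially on $G$; for such $A$ the notion of an $A$-equivariant $\mathcal{A}\YB$-structure on $G$ is simply a left brace structure on $G$, and by \cite[Theorem~2.1]{CedoJespersRio2010} this is exactly what it means for $G$ to be an IYB-group. If $G$ is abelian the trivial brace $a+b:=ab$ already works, so I would assume $G$ is non-abelian of nilpotency class two; then $G'\le\Z(G)$ and $G/G'$ is a non-cyclic finite abelian group. First I would fix $x_1,\dots,x_k\in G$ (with $k\ge2$) whose images give an internal direct decomposition $G/G'=\langle\bar x_1\rangle\oplus\cdots\oplus\langle\bar x_k\rangle$ into non-trivial cyclic summands, and set $A_i=\langle x_i,G'\rangle$ for $1\le i\le k$.

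Next I would record the structural facts about this decomposition. Since $G'\le\Z(G)$, each $A_i=\langle x_i\rangle G'$ is abelian; and $A_i\trianglelefteq G$, because $x_i^{\,g}=x_i[x_i,g]$ with $[x_i,g]\in G'\le A_i$, so $A_i^{\,g}\le A_i$ for every $g\in G$. Hence $A_1A_2\cdots A_k$ is a subgroup of $G$ containing all the $x_i$ and $G'$, so $G=A_1A_2\cdots A_k$. Writing $H_i=A_{i+1}\cdots A_k$ as in Theorem~\ref{thm-main}, the image of $A_1\cdots A_i$ in $G/G'$ is $\bigoplus_{j\le i}\langle\bar x_j\rangle$ and the image of $H_i$ is $\bigoplus_{j>i}\langle\bar x_j\rangle$; these meet trivially, whence $A_1\cdots A_i\cap H_i\le G'$.

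Finally I would equip each $A_i$ with its trivial left brace $(A_i,+,\cdot)$, for which $(A_i,+)=(A_i,\cdot)$, the brace automorphisms coincide with the group automorphisms, and $\Soc(A_i)=A_i=\Z(A_i)$, and then verify the four hypotheses of Theorem~\ref{thm-main}: (1) $A_i$ is normalised by $H_i$ because $A_i\trianglelefteq G$; (2) conjugation of $H_i$ on $A_i$ is by group automorphisms of $A_i$, hence by automorphisms of the trivial brace, so this structure is $H_i$-equivariant (and trivially $A$-equivariant since $A=1$); (3) $A_1\cdots A_i\cap H_i\le G'\le\Z(G)\le\C_G(A_iA_{i+1})$; and (4) $\Z(A_i)=\Soc(A_i)$. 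Theorem~\ref{thm-main} then produces a left brace $(G,+,\cdot)$, so $G$ is an IYB-group. The only delicate point is hypothesis~(3): it is exactly what forces the $x_i$ to lift a \emph{direct} decomposition of $G/G'$ rather than an arbitrary generating set; conditions (1), (2) and (4) are immediate from $G'\le\Z(G)$ and the triviality of the braces on the $A_i$.
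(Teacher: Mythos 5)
Your proof is correct, and it reaches the conclusion by a genuinely streamlined route compared with the paper. The paper first splits $G$ into its Sylow subgroups $A_1\times\cdots\times A_k$, observes that this is an $\mathcal{N}$-decomposition to which Theorem~\ref{thm-main} applies via Lemma~\ref{lem-nilpotency-class-2}, and handles the base case of a single $p$-group by lifting a cyclic direct decomposition of $G/\Z(G)$ to abelian subgroups. You instead work with $G/G'$ in one step: since $G'\le\Z(G)$, the subgroups $A_i=\langle x_i\rangle G'$ lifting a cyclic direct decomposition of $G/G'$ are abelian and normal, and the directness gives $A_1\cdots A_i\cap H_i\le G'\le\Z(G)$, which is exactly hypothesis~(3) of Theorem~\ref{thm-main}; the trivial braces on the $A_i$ satisfy (2) and (4) automatically. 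Your verification of each hypothesis is sound (including the observation that $G/G'$ cyclic forces $G$ abelian, so $k\ge 2$ in the non-abelian case), and your remark that hypothesis~(3) is precisely what requires a \emph{direct} decomposition rather than a mere generating set is the right thing to flag. What your version buys is the avoidance of the Sylow reduction and of Lemma~\ref{lem-nilpotency-class-2} altogether; what the paper's version buys is uniformity with the rest of the article, where the Sylow-based $\mathcal{N}$-decomposition machinery is reused for the main theorems.
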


\begin{proof}
Let $G$ be a nilpotent group of nilpotency class at most $2$. Then $G = A_1A_2\cdots A_k$ is the direct product of $\{ A_1, \cdots,A_k\}$  where $A_i$ is a Sylow $p_i$-subgroup of $G$, $1\leq i \leq k$, $\pi(G) = \{p_1,\cdots, p_k\}$. Assume that $k = 1$. Then $G$ is a $p$-group for some $p$, and $G$ is either cyclic or $G$ has an $\mathcal{N}$-decomposition of $G$  composed of abelian groups since $G/Z(G)$ is a direct product of cyclic subgroups. Since every abelian group is an IYB-group, it follows that $G$ is an IYB-group by Theorem~\ref{thm-main}.

Assume that $k \geq 2$. Then $\{ A_1, \cdots,A_k\}$ is an $\mathcal{N}$-decomposition of $G$ composed of IYB-groups. By Lemma~\ref{lem-nilpotency-class-2}, $G$ satisfies the hypotheses of Theorem~\ref{thm-main} for $A = 1$. Hence $G$ is an IYB-group.

\end{proof}

Applying~\cite[Corollary~4.2]{MengBallesterEstebanFuster2021}, we have:

\begin{lemma}\label{lem-odd-order-commutator}
Let $G$ be a nilpotent group of nilpotency class at most two with $|G'|$ odd. Then $G$ has an IYB-structure which is \(\Aut(G)\)-equivariant with respect to the natural action of \(\text{Aut}(G)\) on \( G \) and $\Z(G) \subseteq \Ker(G)$.
\end{lemma}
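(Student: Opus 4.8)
The plan is to reduce Lemma~\ref{lem-odd-order-commutator} directly to \cite[Corollary~4.2]{MengBallesterEstebanFuster2021}, which (as the paper announces) asserts precisely that a nilpotent group $G$ of class at most two with $|G'|$ odd admits an IYB-structure that is equivariant under the full automorphism group $\Aut(G)$ acting naturally on $G$. So the only thing that genuinely needs argument here is the extra claim $\Z(G)\subseteq\Ker(G)$, i.e. that the centre of $G$ acts trivially on the additive group $(G,+)$ of the associated brace via the map $\lambda$. First I would decompose $G = P_1 \times \cdots \times P_s$ into its Sylow subgroups, with $P_1$ the Sylow $2$-subgroup (on which $G'$ imposes nothing since $|G'|$ is odd, so $P_1$ is abelian). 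The brace constructed in \cite{MengBallesterEstebanFuster2021} is built Sylow-subgroup by Sylow-subgroup, so it suffices to check $\Z(P_i)\subseteq\Ker(P_i)$ for each $i$ and then observe that $\Ker$ and $\Z$ both respect the direct decomposition, whence $\Z(G) = \prod_i \Z(P_i) \subseteq \prod_i \Ker(P_i) \subseteq \Ker(G)$.

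For the abelian factor $P_1$ the assertion is immediate: an abelian group taken with its trivial brace has $\lambda_a = \id$ for all $a$, so $\Ker(P_1) = P_1 \supseteq \Z(P_1)$. For an odd Sylow factor $P = P_i$ of class at most two, I would invoke the explicit description of the brace used in \cite{MengBallesterEstebanFuster2021}: since $|P|$ is odd, $P$ is a $\ZZ$-group in the sense that it carries a natural ``half'' operation, and the standard construction puts on $P$ an addition for which $(P,+)$ is abelian and the $\lambda$-action is given (up to the usual formula~\eqref{eq:producte-lambda}) by conjugation composed with a correction term that is trivial on central elements. Concretely, for $a\in\Z(P)$ one has $ab = ba$ in $(P,\cdot)$ for all $b$, and one checks from $ab = a+\lambda_a(b)$ together with the corresponding identity for $ba$ and the fact that $a$ is central in $(P,+)$ as well (central elements of a class-two group are recovered as central elements of the brace's additive group in this construction) that $\lambda_a(b) = b$ for all $b$. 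Hence $\Z(P)\subseteq\Ker(P)$.

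The one place I expect to have to be careful is verifying that the specific brace produced by \cite[Corollary~4.2]{MengBallesterEstebanFuster2021} really does satisfy $\Z(P) \subseteq \Z(P,+)$ for the odd factors, since $\Ker(B)\cap\Z(B,+) = \Soc(B)$ and what we actually want downstream (this lemma is quoted later to feed hypothesis~(4) $\Z(A_i)\subseteq\Soc(A_i)$ of Theorem~\ref{thm-main}) is the stronger $\Z(G)\subseteq\Soc(G)$. So in fact I would prove $\Z(G)\subseteq\Soc(G)$, splitting as before: $\Z(P_1)\subseteq\Soc(P_1)$ trivially for the abelian $2$-part, and for each odd factor $P$ noting that in the construction $(P,+)$ has the same centre as $(P,\cdot)$ — because the addition is defined so that the commutator structures agree on the relevant subgroup — giving $\Z(P)\subseteq\Z(P,+)\cap\Ker(P) = \Soc(P)$. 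Then $\Soc$ respects direct products, so $\Z(G) = \prod_i\Z(P_i)\subseteq\prod_i\Soc(P_i) = \Soc(G)\subseteq\Ker(G)$, and equivariance under $\Aut(G)$ is exactly the content of \cite[Corollary~4.2]{MengBallesterEstebanFuster2021}. The main obstacle, then, is purely bookkeeping about the internal workings of that cited construction; no new idea is needed.
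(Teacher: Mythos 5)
Your proposal is correct and takes essentially the same route as the paper: the paper proves this lemma simply by invoking \cite[Corollary~4.2]{MengBallesterEstebanFuster2021}, which already supplies both the full $\Aut(G)$-equivariance and the condition $\Z(G)\subseteq\Ker(G)$, and your reconstruction via the half-commutator addition $a+b=ab[b,a]^{1/2}$ is exactly the content of that cited construction. Two of your precautions are unnecessary, though: since $|G'|$ is odd this addition is defined on all of $G$ at once (no Sylow-by-Sylow splitting is needed), and since an IYB-structure is a left brace the group $(G,+)$ is abelian, so $\Z(G,+)=G$ and $\Soc(G)=\Ker(G)$ automatically --- your concern about verifying $\Z(P)\subseteq\Z(P,+)$, and the claim that ``$(P,+)$ has the same centre as $(P,\cdot)$'', are moot.
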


\begin{proof}[\textbf{\emph{Proof of Theorem~\ref{thm-Q-8}}}]
By Theorem~\ref{Ndec}, Lemmas~\ref{lem-nilpotency-class-2},~\ref{nilp} and~\ref{lem-odd-order-commutator}, $G$ has an $\mathcal{N}$-decomposition satisfying the statements of Theorem~\ref{thm-main} for $A = 1$. Hence $G$ is an IYB-group.
\end{proof}

In order to prove Theorem~\ref{thm-Sylow-Q-8}, we need the following IYB-structure on $\Q_8$.
\begin{lemma}\label{lem-IYB-Q-8}
Let $G=\Q_8$ and $A=\Aut(G)$. Then $G$ has an $\Oo^2(A)$-equivariant IYB-structure $(G,+,\cdot)$ with $\Z(G) \leq \Ker(G)$.
\end{lemma}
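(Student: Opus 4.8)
The plan is to exhibit an explicit left-brace structure on the set $G = \Q_8 = \{1, -1, i, -i, j, -j, k, -k\}$ whose multiplicative group is $\Q_8$, and then to verify that the natural action of $\Oo^2(A)$ on $G$ respects the additive group as well. First I would settle on what the additive group $(G,+)$ must be: since a left brace on $\Q_8$ must have abelian additive group of order $8$, and the classification of braces of order $8$ (\cite[Theorem~3.1]{Bachiller2015-p3}) is available, the candidate is $(G,+) \cong \ZZ/4 \oplus \ZZ/2$ (it cannot be $\ZZ/8$ since $\Q_8$ has no element whose $\lambda$-orbit structure is compatible with a cyclic additive group of order $8$, and $(\ZZ/2)^3$ forces the brace to be trivial, hence $\Q_8$ abelian — impossible). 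So I would fix an identification: take $-1$ to be the additive element of order $2$ (it must lie in $\Soc(G) \subseteq \Z(G,+)$, and indeed $\Z(G,\cdot) = \{1,-1\}$), and arrange $i$ to have additive order $4$ with $i + i = -1$, $i + (-1) = -i$, and then $j + i$ should be $k$ up to sign; concretely one checks that the assignment making $(G,+)$ generated additively by $i$ (order $4$) and $j$ (order $2$), with $j + j = 1$, works.

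Next I would pin down the $\lambda$-maps. By~\eqref{eq:producte-lambda} we need $ab = a + \lambda_a(b)$ with each $\lambda_a \in \Aut(G,+)$ and $\lambda$ a homomorphism $(G,\cdot) \to \Aut(G,+)$. Since we want $\Z(G) = \{1,-1\} \leq \Ker(G) = \ker\lambda$, the map $\lambda$ factors through $\Q_8/\{\pm1\} \cong (\ZZ/2)^2$, so there are at most four distinct $\lambda$-maps and $\lambda(G)$ is an elementary abelian $2$-subgroup of $\Aut(\ZZ/4 \oplus \ZZ/2) \cong D_8$. I would take $\lambda(G) \cong (\ZZ/2)^2$ to be the Klein four-subgroup of $\Aut(G,+)$ generated by the automorphism negating the order-$4$ part and the automorphism $x \mapsto x + (\text{order-}2\text{ part})$ when $x$ has order $4$; then one defines $+$ on all of $G$ by the formula $a + b := a b \lambda_a^{-1}(\text{stuff})$ — more cleanly, one simply writes down the full addition table from the multiplication table and the chosen $\lambda$, and checks the brace identity $a(b+c) = ab - a + ac$ on all triples. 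This is a finite check; the classification reference guarantees such a brace exists, so the real content is choosing the identification so that the equivariance in the next step becomes transparent.

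Then I would handle equivariance. We have $A = \Aut(\Q_8) \cong \Sym_4$, so $\Oo^2(A) = A' \cong A_4$, the unique subgroup of index $2$; concretely $\Oo^2(A)$ is generated by the order-$3$ automorphisms cyclically permuting $\{i,j,k\}$ together with the double transpositions like $i \mapsto -i, j \mapsto -j, k \mapsto k$. For equivariance I must show every $\sigma \in \Oo^2(A)$ is an automorphism of $(G,+)$ as well — it is automatically an automorphism of $(G,\cdot)$ since $A = \Aut(G,\cdot)$. Equivalently, conjugation by $\sigma$ must stabilise the subgroup $\lambda(G) \leq \Aut(G,+)$ and be compatible with the additive structure; since $\Soc$ and $\Ker$ are characteristic in the brace, $\sigma$ already fixes $\{\pm 1\}$ and permutes the three "additive-order-$4$ cosets". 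The key point is that the order-$3$ cycle $(i\,j\,k)$ is an automorphism of $\ZZ/4 \oplus \ZZ/2$: this is exactly the statement that $A_4$, not all of $\Sym_4$, lifts — the transposition-type automorphisms of $\Q_8$ such as $i \leftrightarrow j$ will \emph{not} preserve $+$, which is precisely why we only get $\Oo^2(A)$-equivariance and not full $\Aut(G)$-equivariance.

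I expect the main obstacle to be organisational rather than deep: choosing the labelling of the eight elements so that a single clean description of $(G,+)$ simultaneously (i) reproduces $\Q_8$ under the brace product, (ii) makes $\{\pm1\} \leq \Ker(G)$ visible, and (iii) makes the $3$-cycle $(i\,j\,k)$ manifestly additive. Once the labelling is right, the brace axiom and the additivity of the generators of $\Oo^2(A)$ are each a bounded verification. A concrete safeguard is to compute the addition table explicitly once, observe it equals $\ZZ/4 \oplus \ZZ/2$ with the order-$3$ symmetry built in, and cite \cite[Theorem~3.1]{Bachiller2015-p3} to confirm this is (up to isomorphism) the non-trivial brace with multiplicative group $\Q_8$, so no case has been missed.
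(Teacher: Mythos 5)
Your proposal rests on a wrong choice of additive group, and this is fatal rather than organisational. You assert that $(G,+)$ must be $\ZZ/4\oplus\ZZ/2$ because ``$(\ZZ/2)^3$ forces the brace to be trivial, hence $\Q_8$ abelian''. That implication is false: triviality of a brace is a statement about the $\lambda$-maps, not about the exponent of the additive group, and there are non-trivial braces with elementary abelian additive group and non-abelian multiplicative group (equivalently, $\Q_8$ occurs as a regular subgroup of $\AGL(3,2)$). Indeed the paper's construction takes exactly $(G,+)\cong(\ZZ/2)^3$: writing every element uniquely as $a^xb^yc^z$ with $c=a^2=b^2$ and $x,y,z\in\FF_2$, the addition is coordinatewise, the product is $a^{x_1+x_2}b^{y_1+y_2}c^{z_1+z_2+x_2y_1+y_1y_2+x_1x_2}$, and $\lambda_c=\mathrm{id}$ gives $\Z(G)\leq\Ker(G)$.

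Worse, with your choice the statement cannot be proved at all: $\Aut(\ZZ/4\oplus\ZZ/2)$ has order $8$, so it contains no element of order $3$. An order-$3$ automorphism of $\Q_8$ such as the cycle $(i\,j\,k)$ acts non-trivially on the underlying set, so if it preserved ``$+$'' it would have order $3$ in $\Aut(G,+)$ --- impossible when $(G,+)\cong\ZZ/4\oplus\ZZ/2$. Your ``key point'' that the $3$-cycle is an automorphism of $\ZZ/4\oplus\ZZ/2$ is therefore false, and since $\Oo^2(A)\cong A_4$ is generated by its elements of order $3$, no brace with that additive group can be $\Oo^2(A)$-equivariant. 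The whole reason the paper takes $(G,+)$ elementary abelian is that $\Aut(G,+)\cong\GL(3,2)$ has order $168$ and so can accommodate order-$3$ automorphisms; the proof then lists all eight order-$3$ automorphisms of $\Q_8$ explicitly and checks by direct computation that each is additive, which yields $\Oo^2(A)\leq\Aut(G,+,\cdot)$. Citing the classification of braces of order $8$ would not rescue your argument, because the issue is not existence of a brace on $\Q_8$ but existence of one admitting the required order-$3$ symmetry.
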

\begin{proof}
Let $G=\Q_8=\langle a,b \mid a^4=b^4=1, a^2=b^2, [a,b]=b^2\rangle$. Write $c=a^2=b^2 \in \Z(G)$.  Note that for every element $u \in G$, there exists the unique triple $(x,y,z) \in \mathbb{F}_2\oplus \mathbb{F}_2\oplus \mathbb{F}_2 $ such that $u=a^xb^yc^z$, where $\mathbb{F}_2=\{0,1\}$ is a field of order $2$. We define an addition on $G$ as
$$a^{x_1}b^{y_1}c^{z_1}+a^{x_2}b^{y_2}c^{z_2}= a^{x_1+x_2}b^{y_1+y_2}c^{z_1+z_2}.$$
Note that $1+1=0$ in the field $\mathbb{F}_2$. Hence $a^{x_1}a^{x_2}=a^{x_1+x_2}c^{x_1x_2}, b^{x_1}b^{x_2}=b^{x_1+x_2}c^{x_1x_2}$ and, more generally,
$$(a^{x_1}b^{y_1}c^{z_1})(a^{x_2}b^{y_2}c^{z_2})=a^{x_1+x_2} b^{y_1+y_2} c^{z_1+z_2+x_2y_1+y_1y_2+x_1x_2}$$
Then $(G,+,\cdot)$ is a left brace with $(G,+)$ elementary Abelian.
Note that $\lambda_c(u)=cu-c=a^xb^yc^{z+1}-c=a^xb^yc^z=u$ for each $u=a^xb^yc^z \in G$. Hence $\Z(G)=\langle c\rangle \leq \Ker(G)$.

Now we will show that this IYB-structure on $\Q_8$ is $\Oo^2(A)$-equivariant, that is,  $\Oo^2(A) \leq \Aut(G,+,\cdot)$. Since $A \cong S_4$, it suffices to prove that every element of $A$ with order $3$ belongs to $\Aut(G,+,\cdot)$. Note that every automorphism with order $3$ of $\Q_8$ permutates its three cyclic subgroups of order $4$. Hence we can easily list its all automorphism with order $3$:
$$
\begin{aligned}
&\alpha_1: a \mapsto b; b \mapsto ab;~~~~~~~~~~~\beta_1: a \mapsto ab; b \mapsto a;\\
&\alpha_2: a \mapsto b; b \mapsto (ab)c;~~~~~~\beta_2: a \mapsto (ab)c; b \mapsto a;\\
&\alpha_3: a \mapsto bc; b \mapsto ab;~~~~~~~~~\beta_3: a \mapsto (ab)c; b \mapsto ac;\\
&\alpha_4: a \mapsto bc; b \mapsto (ab)c;~~~~\beta_4: a \mapsto ab; b \mapsto ac;\\
\end{aligned}$$
Note that $\beta_i=\alpha_i^2$ for each $1 \leq i \leq 4$ and $\Oo^2(A)=\langle \alpha_i \mid 1 \leq i \leq 4 \rangle$.
Note that $(ab)^y=a^yb^y$ for $y=0,1$. We can check that $\alpha_i \in \Aut(G,+)$ for each $1 \leq i \leq 4$.

Since $\alpha_1(a^xb^yc^z)=b^x(ab)^yc^z=b^xa^yb^yc^z=a^yb^x[b^x,a^y]b^yc^z
=a^yb^xb^yc^{z+xy}=a^yb^{x+y}c^{xy}c^{z+xy}=a^yb^{x+y}c^z$ for $x,y,z \in \mathbb{F}_2$, it follows that
$$
\begin{aligned}
\alpha_1(a^{x_1}b^{y_1}c^{z_1}+a^{x_2}b^{y_2}c^{z_2})&=\alpha_1(a^{x_1+x_2}b^{y_1+y_2}c^{z_1+z_2})\\
&=a^{y_1+y_2}b^{x_1+x_2+y_1+y_2}c^{z_1+z_2}\\
&=a^{y_1}b^{x_1+y_1}c^{z_1}+a^{y_2}b^{x_2+y_2}c^{z_2}\\
&=\alpha_1(a^{x_1}b^{y_1}c^{z_1})+\alpha_1(a^{x_2}b^{y_2}c^{z_2}).
\end{aligned}
$$
Hence $\alpha_1 \in \Aut(G,+,\cdot)$. Similarly, for $x,y,z \in \mathbb{F}_2$, we have

$$
\begin{aligned}
\alpha_2(a^xb^yc^z)&=b^x(abc)^yc^z=a^yb^{x+y}c^{z+y}\\
\alpha_3(a^xb^yc^z)&=(bc)^x(ab)^yc^z=a^yb^{x+y}c^{z+x}\\
\alpha_4(a^xb^yc^z)&=(bc)^x(abc)^yc^z=a^yb^{x+y}c^{z+x+y}\\
\end{aligned}
$$
It can be directly verified that $\alpha_2,\alpha_3,\alpha_4 \in \Aut(G,+,\cdot)$ as well. Hence such IYB-structure on $\Q_8$ is $\Oo^2(A)$-equivariant and the proof is complete.
\end{proof}

Now we can prove Theorem~\ref{thm-Sylow-Q-8}.

\begin{proof}[\textbf{\emph{Proof of Theorem~\ref{thm-Sylow-Q-8}}}]
Since $G$ is soluble and all Sylow subgroups of $G$ have nilpotency class at most two, by Theorem~\ref{Ndec} and Lemma~\ref{lem-nilpotency-class-2}, $G$ has an $\mathcal{N}$-decomposition $\{A_1,A_2,\cdots,A_k\}$ such that
$A_1A_2\cdots A_i \cap A_{i+1}\cdots A_k \leq \C_G(A_iA_{i+1})$ for each $1\leq i \leq k-1$.
Write $H_i=A_{i+1}A_{i+2} \cdots A_k$ for each $1 \leq i \leq k-1$ and $H_k=1$.
We will apply Theorem~\ref{thm-main} for $A=1$. Hence we only have to show that $A_i$ has an $H_i$-equivariant IYB-structure $(A_i,+, \cdot)$ and $\Z(A_i) \leq \Ker(A_i)$ for each $i$.

Let $P_i$ be a Sylow $2$-subgroup of $A_i$ for each $i$ and write $P=P_1P_2\cdots P_k$, which is a
Sylow $2$-subgroup of $G$ by Lemma~\ref{lem-Sylow}.  Fixed some $i \in \{1,2,\cdots,k\}$. If $|P_i| \leq 2^2$, it implies that $P_i$ is Abelian. As $A_i$ is nilpotent, $A_i'$ is of odd order. It follows from Lemma~\ref{lem-odd-order-commutator} that $A_i$ has an $\Aut(A_i)$-equivariant IYB-structure $(A_i,+, \cdot)$ and $\Z(A_i) \leq \Ker(A_i)$, as desired.

Now assume that $|P_i| \geq 2^3$. It implies that $P_i=P$ as $|P|=2^3$ by hypothesis. Write $A_i=P \times W$, where $W$ is the Hall $2'$-subgroup of $A_i$. Considering the action of $H_i$ on $A_i=P \times W $, for each $j>i$, $P_j=P_j \cap P=P_j \cap P_i \leq A_i \cap H_j \leq \C_G(A_i)$. Hence $P_j \leq \Z(P)$, which implies that $P_{i+1}P_{i+2} \cdots P_k \leq \C_{H_i}(P)$. Note that $H_i/\C_{H_i}(P)$ is of order odd and acts faithfully on $P$. It follows from Lemma~\ref{lem-IYB-Q-8} that $P$ has an IYB-structure $(P,+,\cdot)$, which is $H_i/\C_{H_i}(P)$-equivariant, also $H_i$-equivariant, and $\Z(P) \leq \Ker(P)$.
Since $W$ is nilpotent of class two with odd order, by Lemma~\ref{lem-odd-order-commutator}, $W$ has an $H_i$-equivariant IYB-structure $(W,+,\cdot)$ with $\Z(W) \leq \Ker(W)$. It follows from Theorem~\ref{tA} that
$A_i$ has an $H_i$-equivariant IYB-structure $(A_i,+,\cdot)$ such that $\Z(A_i)=\Z(P)\Z(W) \leq \Ker(P)\Ker(W)=\Ker(A_i)$, as desired.
\end{proof}

\section{Some corollaries: skew braces of $\mathcal{N}$-type.}

In this section we show that \cite[Theorem~3.2,Theorem~3.5 ]{CedoOkninski2025} are natural consequences of our results.

Let $(B,+,\cdot)$ a finite skew left brace of $\mathcal{N}$-type. Then $(B,+)$ is nilpotent and so it has a $\lambda$-invariant Hall $\pi$-subgroup for all set of primes $\pi$. In particular, $(B,\cdot)$ has Hall $\pi$-subgroups for all set of primes $\pi$. By \cite[Theorems~I.3.3 and~I.3.5]{DoerkHawkes1992}), $(B,\cdot)$ is soluble. Hence every finite $\mathcal{N}\YB$-group is soluble (see~\cite{Byott2015}).

\begin{corollary}
Let $G$ be a group with the Sylow tower property. Then $G$ is an $\mathcal{N}\YB$-group.
\end{corollary}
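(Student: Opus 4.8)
The plan is to apply Theorem~\ref{thm-main} with $\mathfrak{X}=\mathcal{N}$, the class of all nilpotent groups (which is closed under quotients and finite direct products), and with trivial acting group $A=1$; the whole content is to realise $G$ as a product $A_1A_2\cdots A_k$ of Sylow subgroups satisfying the four hypotheses of that theorem.

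First I would unwind the Sylow tower. Fix primes $p_1,\ldots,p_k$ witnessing it, so that $G$ has a chain of normal subgroups $1=N_0\trianglelefteq N_1\trianglelefteq\cdots\trianglelefteq N_k=G$ with $N_i$ the normal Hall $\{p_1,\ldots,p_i\}$-subgroup of $G$ and $N_i/N_{i-1}$ a Sylow $p_i$-subgroup of $G/N_{i-1}$. Iterating the Schur--Zassenhaus theorem (each $N_i/N_{i-1}$ is a $p$-group, hence soluble), I would, starting from $K_0=G$, let $A_i$ be the normal Sylow $p_i$-subgroup of $K_{i-1}$ — it exists because $K_{i-1}\cong G/N_{i-1}$ has $N_i/N_{i-1}$ as a normal Sylow $p_i$-subgroup, and it is a Sylow $p_i$-subgroup of $G$ — and let $K_i$ be a complement to $A_i$ in $K_{i-1}$, so that $K_{i-1}=A_iK_i$, $A_i\trianglelefteq K_{i-1}$ and $K_k=1$. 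A short induction then shows $A_1A_2\cdots A_i=N_i$ and $A_{i+1}A_{i+2}\cdots A_k=K_i$ for every $i$; in particular $G=A_1A_2\cdots A_k$, and in the notation of Theorem~\ref{thm-main} the groups $H_i$ are precisely the $K_i$.

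Now I would equip each $A_i$ with its trivial brace $(A_i,+,\cdot)$, $a+b:=ab$. Since $A_i$ is a $p_i$-group this is an $\mathcal{N}\YB$-structure; its additive and multiplicative groups coincide, so the skew-brace automorphisms of $A_i$ are exactly the group automorphisms of $A_i$, and $\Soc(A_i)=\Ker(A_i)\cap\Z(A_i,+)=\Z(A_i)$. The four conditions of Theorem~\ref{thm-main} are then readily checked for each $i$: (1) $A_i$ is normalised by $H_i=K_i\le K_{i-1}$ because $A_i\trianglelefteq K_{i-1}$; (2) the conjugation action of $H_i$ on $A_i$ is by group automorphisms of $A_i$, hence by automorphisms of the trivial brace, so it is equivariant; (3) $A_1\cdots A_i\cap H_i=N_i\cap K_i=1$ since $|N_i|$ and $|K_i|$ are coprime, whence this intersection lies in $\C_G(A_iA_{i+1})$; (4) $\Z(A_i)=\Soc(A_i)$. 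Theorem~\ref{thm-main} (with $A=1$) then endows $G$ with an $\mathcal{N}\YB$-structure, which is exactly the assertion that $G$ is an $\mathcal{N}\YB$-group.

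The one place where I expect to spend real effort is the bookkeeping in the second paragraph: checking that the iterated Schur--Zassenhaus complements can be arranged so that $A_1A_2\cdots A_i=N_i$ and $A_{i+1}\cdots A_k=K_i$. This is what makes both families of partial products into subgroups and collapses condition~(3) to the trivial statement $N_i\cap K_i=1$; once it is in place everything else is formal, as the trivial brace on a $p$-group is invariant under all of $\Aut(A_i)$. In particular, in contrast with Theorems~\ref{thm-Q-8} and~\ref{thm-Sylow-Q-8}, no hypothesis on the nilpotency class of the Sylow subgroups of $G$ is used here.
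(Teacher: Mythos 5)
Your proposal is correct and follows essentially the same route as the paper: decompose $G$ as a product of Sylow subgroups forming a Sylow tower, endow each with the trivial skew brace of nilpotent type (so that equivariance is automatic and $\Soc(A_i)=\Z(A_i)$), and apply Theorem~\ref{thm-main} with $A=1$. The only difference is that you spell out the iterated Schur--Zassenhaus bookkeeping that the paper leaves implicit.
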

\begin{proof}
Since $G$ is a group with the Sylow tower property, we may assume that $G=A_1A_2\cdots A_k$ with $\pi(G)=\{p_1,p_2,\cdots,p_k\}$, where $A_i$ is a Sylow $p_i$-subgroup of $G$ such that $A_1A_2\cdots A_i \unlhd G$ and $A_1A_2\cdots A_i \cap A_{i+1}\cdots A_k=1$. Since $A_i$ is nilpotent, we can get $(A_i,+,\cdot)$ the trivial skew brace of nilpotent type, which implies that $A_i$ has fully equivariant $\mathcal{N}\YB$-structure $(A_i,+,\cdot)$ with $\Ker(A_i)=A_i$, moreover, $\Z(A_i)=\Z(A_i,+)=\Z(A_i,+) \cap \Ker(A_i)=\Soc(A_i)$.
Applying Theorem~\ref{thm-skew-nilpotent-type}, $G$ is an $\mathcal{N}\YB$-group.
\end{proof}

\begin{corollary}
Let $G$ be a soluble group with Sylow subgroups
of nilpotency class at most $2$. Then $G$ is a $\mathcal{N}\YB$-group.
\end{corollary}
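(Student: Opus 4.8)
The plan is to apply Theorem~\ref{thm-skew-nilpotent-type} with $\mathfrak{X} = \mathcal{N}$, the class of all nilpotent groups — which is closed under quotients and (finite) direct products — and with the trivial group $A = 1$ acting on $G$. This is the $\mathcal{N}\YB$-analogue of the argument used for Theorem~\ref{thm-Q-8}, but it is simpler: working with nilpotent rather than abelian additive groups, we may use the \emph{trivial} skew brace on each piece of an $\mathcal{N}$-decomposition, and the $\Q_8$-free hypothesis is no longer needed.

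First I would invoke Theorem~\ref{Ndec} (with $A = 1$, so no coprimality constraint intervenes) to obtain an $\mathcal{N}$-decomposition $\{A_1, A_2, \dots, A_k\}$ of $G$: thus $G = A_1A_2\cdots A_k$, each $A_i$ is nilpotent, $A_i$ is normalised by $A_j$ whenever $i < j$, and $(A_1\cdots A_i) \cap (A_{i+1}\cdots A_k) \leq A_1'\cdots A_i'$ for each $i$. Writing $H_i = A_{i+1}\cdots A_k$ and $H_k = 1$ as in Theorem~\ref{thm-skew-nilpotent-type}, the normality clause gives that $H_i$ normalises $A_i$, which is hypothesis~(1) of that theorem, while Lemma~\ref{lem-nilpotency-class-2} (again with $A = 1$, using that all Sylow subgroups of $G$ have class at most two) yields $A_1A_2\cdots A_i \cap H_i \leq \C_G(A_iA_{i+1})$, which is hypothesis~(3).

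Next I would equip each $A_i$ with its trivial skew left brace structure $(A_i, +, \cdot)$, where $a + b := ab$. Since $A_i$ is nilpotent, $(A_i, +) = (A_i, \cdot)$ is nilpotent, so $(A_i, +, \cdot)$ is of $\mathcal{N}$-type and $A_i$ is an $\mathcal{N}\YB$-group. Because $+$ and $\cdot$ coincide, $\lambda_a = \Id$ for all $a$, hence $\Ker(A_i) = A_i$ and $\Soc(A_i) = \Ker(A_i) \cap \Z(A_i, +) = \Z(A_i)$, so hypothesis~(4), $\Z(A_i) \leq \Soc(A_i)$, holds with equality. Moreover every automorphism of $(A_i, \cdot)$ automatically fixes $+$, so the conjugation action of $H_i$ on $A_i$ makes $(A_i, +, \cdot)$ an $H_i$-equivariant $\mathcal{N}\YB$-structure, which is hypothesis~(2); the $A$-equivariance is vacuous for $A = 1$. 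With all hypotheses of Theorem~\ref{thm-skew-nilpotent-type} met, $G$ acquires an $\mathcal{N}\YB$-structure, i.e. $G$ is an $\mathcal{N}\YB$-group.

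There is essentially no genuine obstacle here: the structural content is already packaged in Theorem~\ref{Ndec}, Lemma~\ref{lem-nilpotency-class-2} and Theorem~\ref{thm-skew-nilpotent-type}, and the only point demanding a word of care is the observation that the trivial brace on a nilpotent group is fully equivariant under its automorphism group — immediate, since conjugation preserves the operation defining $+$. (One could alternatively feed Theorem~\ref{thm-skew-nilpotent-type} trivial braces only where the Sylow structure forces class exactly two and honest IYB-braces elsewhere, but that refinement is unnecessary.)
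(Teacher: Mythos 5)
Your proposal is correct and follows essentially the same route as the paper: invoke Theorem~\ref{Ndec} together with Lemma~\ref{lem-nilpotency-class-2} to obtain an $\mathcal{N}$-decomposition satisfying the centraliser condition, endow each nilpotent factor with its trivial skew brace of $\mathcal{N}$-type (so that $\Z(A_i)=\Soc(A_i)$ and equivariance is automatic), and apply Theorem~\ref{thm-skew-nilpotent-type}. Your additional remarks verifying the individual hypotheses merely spell out what the paper leaves implicit.
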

\begin{proof}
We can apply Theorem~\ref{Ndec} and Lemma~\ref{lem-nilpotency-class-2} to conclude that 
$G$ has an $\mathcal{N}$-decomposition $\{A_1,\cdots,A_k\}$ such that
$A_1A_2\cdots A_i \cap A_{i+1}\cdots A_k \leq \C_G(A_iA_{i+1})$ for each $i$. Every nilpotent group $A_i$ has a trivial skew brace $(A_i,+,\cdot)$ with $\Z(A_i)=\Soc(A_i)$. By Theorem~\ref{thm-skew-nilpotent-type}, $G$ is an $\mathcal{N}\YB$-group.
\end{proof}

\textbf{Acknowledgment:} The first author is sponsored by Natural Science Foundation of Shanghai
(24ZR1422800) and National Natural Science Foundation of China (12471018). The second author is supported by the grant CIAICO/2023/007 from the Conselleria d'Educaci\'o, Universitats i Ocupaci\'o, Generalitat Valenciana and by the program "High-end Forcing Expert Program of China(H20240848)", Shanghai University.

\bibliographystyle{plain}
\bibliography{bibM}
\end{document}